 \newtheorem{thm}{Theorem}[section]
 \newtheorem{lem}[thm]{Lemma}
 \newtheorem{rem}[thm]{Remark}
 \numberwithin{equation}{section}
\def\R{{\Bbb R}}
\def\la{{\lambda}}
\def\R{{\mathbb R}}
\def\Z{{\Bbb Z}}
\def\T{{\Bbb T}}
\def\S{{\Bbb S}}
\def\ve{{\varepsilon}}
\def\ve{{\varepsilon}}
\def\Om{{\Omega}}
\def\bge{\begin{eqnarray}}
\def\bgee{\begin{eqnarray*}}
\def\ege{\end{eqnarray}}
\def\egee{\end{eqnarray*}}
\newcommand{\ee}{{\mathrm{e}}}
\def\be{\begin{equation}}
\def\ee{\end{equation}}
\def\bse{\begin{subequations}}
\def\ese{\end{subequations}}
\def\bge{\begin{eqnarray}}
\def\bgee{\begin{eqnarray*}}
\def\ege{\end{eqnarray}}
\def\egee{\end{eqnarray*}}
\date{\today}
\dedicatory{Dedicated to Professor Nicholas Alikakos on the occasion of his retirement}
\date{\today}
\begin{document}
\title[]
{Gradient inequality and convergence of the normalized Ricci flow}

\author{Nikos I. Kavallaris}
\address{Department of Mathematical and Physical Sciences, University of Chester, Thornton Science Park,
Pool Lane, Ince, Chester  CH2 4NU, UK}
\email{n.kavallaris@chester.ac.uk}

\author{Takashi Suzuki}
\address{Center for Mathematical Modeling and Data Science, Osaka University,
}

\email{suzuki@sigmath.es.osaka-u.ac.jp}

\subjclass{Primary 35K55, }
%35J60; Secondary 74H35, 74G55, 74K15

\keywords{Ricci flow, gradient inequlaity.}

\date{\today}

%\linenumbers

%\date{}
\pagenumbering{arabic}

%_________________________________________________________________

\begin{abstract}
We study the problem  of convergence of the normalized Ricci flow  evolving  on a compact manifold $\Omega$ without boundary. In \cite{KS10, KS15} we derived, via PDE techniques, global-in-time existence of the classical solution and pre-compactness of the orbit. In this work we show its convergence to steady-states, using a gradient inequality of {\L}ojasiewicz type. We have thus an alternative proof of \cite{ha}, but  for general manifold $\Omega$ and not only for unit sphere. As a byproduct of that approach we also derive the rate of convergence according to  this  steady-sate being  either degenerate or non-degenerate as a critical point of a related energy functional.
\end{abstract}

\maketitle
\vspace{0.5in}

\section{Introduction}\label{sec1}

In the current work we revisit the following problem of logarithmic diffusion
\begin{eqnarray}
& & u_t=\Delta \log u+u-\frac{1}{|\Omega|}\int_\Omega u\, dx, \qquad \;\,x\in \Omega,\;\;t>0\label{ieq1}\\
& & u(x,0)=u_0(x)>0, \qquad \qquad \qquad \qquad \quad x\in \Omega \label{ieq2}
\end{eqnarray}
with
\begin{equation}\label{consv}
\int_{\Omega} u_0(x) \ dx=\lambda, 
\end{equation}
where $\lambda>0$ is a constant and $\Omega$ is a compact Riemannian surface without boundary. When $\lambda=8\pi$ and  $\Omega$ equals to the unit sphere $\S^2=\{x\in \R^2: ||x||<1\}$, problem  \eqref{ieq1} - \eqref{consv} describes an evolution of the metric $g=g(t)$ on $\Omega$, that is, the normalized Ricci flow introduced by \cite{ha} as 
\[ \frac{\partial g}{\partial t}=(r-R)g, \] 
where $R$ is the scalar curvature, $r$ is the volume mean 
\[ r=\frac{\int_\Omega R d\mu}{\int_\Omega d\mu}, \] 
and $\mu=\mu(t)$ is the area element, cf.  \cite{bsy, KS18, S15, S20}. 

The standard parabolic theory, cf.  \cite{lsu68},  assures that for smooth initial data $u_0(x)>0$ problem  \eqref{ieq1} - \eqref{ieq2} has  a unique classical solution local in time, denoted by $u(x,t)>0$ in $\Omega\times (0,T)$ with $T=T_{max}>0;$. there also  holds that  
\[ 
\int_{\Omega} u(x,t)\,dx=\int_{\Omega} u_0(x)\,dx\equiv \lambda>0 
\] 
by (\ref{ieq2}). The main result in  \cite{ha} reads as:  if 
\begin{equation} 
\lambda=8\pi, \quad \Omega=\S^2,  
 \label{nrf}
\end{equation} 
then there holds that $T=+\infty$ and   
\begin{equation} \label{ikk2}
u(\cdot,t)\to u^*, \ t\to\infty\quad \mbox{in $C^{\infty}$ topology},
\end{equation} 
where $u^*=u^*(x)>0$  is a stationary solution to  \eqref{ieq1} - \eqref{consv}: 
\begin{equation} 
-\Delta \log u^\ast=u^\ast-\frac{1}{|\Omega|}\int_\Omega u^\ast\, dx, \ \ x\in \Omega, \quad \int_\Omega u^\ast \ dx =\lambda. 
 \label{stationary}
\end{equation} 
Remarkably, Hamilton's approach in \cite{ha}, uses the geometric structure of  problem \eqref{ieq1}-\eqref{consv}  valids only for the special case (\ref{nrf}), which results in the control of several key geometric quantities related to the evolution of the metric $g=g(t).$ 

On the other hand, via a PDE approach, which works to any $0<\la\leq 8\pi$ and  any compact Riemannian surface $\Omega$ without boundary,  we could derive convergence \eqref{ikk2}, cf. \cite{KS10, KS15}. Specifically,  in spite of the lack of geometric structure of this general case, the following holds.  
\begin{thm}[\cite{KS10, KS15}] 
Assume that $\Omega$ is a compact Riemannian surface without boundary and $0<\lambda\leq 8\pi$. Then the solution $u=u(x,t)$ to  \eqref{ieq1} - \eqref{consv} exists global in time and satisfies the uniform estimates 
\begin{equation} 
\sup_{t\geq 0}\left\{||u(\cdot, t)||_{\infty}+||u^{-1}(\cdot, t)||_{\infty}\right\} <\infty. 
 \label{compact}
\end{equation} 
 \label{thm1} 
\end{thm}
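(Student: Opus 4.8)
The plan is to reduce the global-in-time statement to two uniform a priori bounds and then close them with the variational structure of the flow. Since $\int_\Omega u(\cdot,t)\,dx \equiv \lambda$ is conserved, the nonlocal term is the fixed constant $\bar\lambda := \lambda/|\Omega|$, so \eqref{ieq1} reads $u_t = \Delta\log u + u - \bar\lambda$, an honest (singular) quasilinear parabolic equation rather than a genuinely nonlocal one. Local well-posedness and positivity on $(0,\Tmax)$ come from \cite{lsu68}, and the standard continuation criterion reduces the theorem to showing that $\|u(\cdot,t)\|_\infty$ and $\|u^{-1}(\cdot,t)\|_\infty$ neither blow up in finite time nor grow unboundedly as $t\to\infty$. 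Thus the whole content is the simultaneous \textbf{anti-concentration} (upper) and \textbf{anti-extinction} (lower) estimate.

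For the upper bound I would exploit the gradient-flow structure. Let $\psi=\psi(\cdot,t)$ solve $-\Delta\psi=u-\bar\lambda$ with $\int_\Omega\psi\,dx=0$; then \eqref{ieq1} becomes the continuity-type equation
\[
u_t=\Delta(\log u-\psi),
\]
which is the $H^{-1}$ gradient flow of the mean-field free energy
\[
\mathcal F(u)=\int_\Omega u\log u\,dx-\frac12\int_\Omega|\nabla\psi|^2\,dx,
\qquad
\frac{d}{dt}\mathcal F(u(t))=-\int_\Omega|\nabla(\log u-\psi)|^2\,dx\le0.
\]
The crucial fact is that $\mathcal F$ is bounded below exactly in the regime $0<\lambda\le 8\pi$: this is the logarithmic Hardy–Littlewood–Sobolev inequality (equivalently the Moser–Trudinger–Onofri inequality) on the compact surface $\Omega$, whose sharp constant places the threshold precisely at $8\pi$. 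Monotonicity then traps $\mathcal F(u(t))\in[\inf\mathcal F,\mathcal F(u_0)]$, yielding a uniform-in-time bound on the entropy $\int_\Omega u\log u$ and on $\int_\Omega|\nabla\psi|^2$.

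From these I would bootstrap. In the strictly subcritical range $\lambda<8\pi$ the Moser–Trudinger inequality has room to spare, so the entropy bound upgrades to $\sup_t\|u(\cdot,t)\|_{L^p}<\infty$ for every $p<\infty$; elliptic regularity then puts $\psi$ in $C^{1,\alpha}$, and a parabolic $L^p$–$L^\infty$ smoothing (Moser iteration) promotes this to $\sup_t\|u(\cdot,t)\|_\infty<\infty$. \emph{The main obstacle is the critical case $\lambda=8\pi$}, where Moser–Trudinger is borderline and concentration cannot be excluded by the energy bound alone. Here I would argue by contradiction via concentration-compactness: a finite-time or time-sequential blow-up must concentrate a bubble carrying exactly the quantized mass $8\pi$ at a point, and I would exclude this using the monotone, bounded energy together with the dissipation bound $\int_0^\infty\int_\Omega|\nabla(\log u-\psi)|^2\,dx\,dt<\infty$, which forces any limiting profile to be an (entire) stationary bubble whose energy defect is incompatible with $\mathcal F$ remaining below $\mathcal F(u_0)$.

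Finally, for the lower bound I would avoid naive comparison—constant subsolutions fail since $\bar\lambda$ is only marginally stable—and instead combine the upper bound with the singular nature of the diffusion. Once $\|u\|_\infty\le C$, the conserved mass $\int_\Omega u=\lambda$ keeps $u$ bounded below on a set of definite measure, and the strong smoothing of $\Delta\log u$ for small values, through a parabolic Harnack inequality for singular equations of DiBenedetto type, propagates this into a uniform pointwise lower bound $\inf_{\Omega\times(0,\infty)}u>0$, i.e.\ $\sup_t\|u^{-1}\|_\infty<\infty$. With both bounds in hand the equation is uniformly parabolic with smooth coefficients (as $\log$ is smooth on $[c,C]$), so Schauder estimates give uniform higher regularity, global existence $\Tmax=+\infty$, and the estimate \eqref{compact}.
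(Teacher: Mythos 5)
Your treatment of the subcritical range $0<\lambda<8\pi$ is essentially the argument the paper sketches (and attributes to \cite{KS10}): the $H^{-1}$ gradient-flow form \eqref{eqn:b-1}--\eqref{eqn:b-2}, the dissipation identity \eqref{modelb} together with $L^1$ conservation, boundedness below of the free energy exactly up to the Trudinger--Moser--Fontana threshold \eqref{ftm}, and Moser iteration to convert these bounds into \eqref{compact}. Up to that point the two routes coincide.

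The genuine gap is at the critical mass $\lambda=8\pi$, where you ask a single concentration-compactness contradiction to rule out both finite-time blow-up and unboundedness as $t\to\infty$; two steps fail as stated. First, your claim that any limiting profile must be a \emph{stationary} bubble rests on the finiteness of the total dissipation $\int_0^\infty\int_\Omega|\nabla(\log u-\psi)|^2\,dx\,dt$, which produces time slices with vanishing dissipation only when the existence time is already infinite; at a putative finite-time singularity $T<\infty$ the dissipation is merely integrable on $(0,T)$, no sequence $t_k\to T$ with vanishing dissipation is guaranteed, and the stationarity (hence the quantization) of the limit profile is lost. The paper handles precisely this step by a different device, the Benilan--Crandall inequality $u_t/u\leq e^t/(e^t-1)$, which integrates to $u(x,t)\leq u(x,s)\,(e^t-1)/(e^s-1)$ and directly forbids finite-time blow-up of $\Vert u\Vert_\infty$ at $\lambda=8\pi$; some substitute for this is indispensable in your scheme. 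Second, the energetic contradiction you invoke --- that a bubble of mass $8\pi$ is ``incompatible with $\mathcal{F}$ remaining below $\mathcal{F}(u_0)$'' --- is empty exactly at critical mass: for a family concentrating mass $\lambda$ at scale $\varepsilon$, the entropy behaves like $-2\lambda\log\varepsilon$ while the field term contributes $+\frac{\lambda^2}{4\pi}\log\varepsilon$, so that $\mathcal{F}\sim\frac{\lambda}{4\pi}(\lambda-8\pi)\log\varepsilon+O(1)$. For $\lambda<8\pi$ this diverges to $+\infty$ and bounded energy excludes concentration (the subcritical mechanism), but at $\lambda=8\pi$ the leading terms cancel and $\mathcal{F}$ stays bounded along concentration --- this exact balance is what makes $8\pi$ critical, and it is why \cite{KS15} needs a genuine concentration-compactness analysis rather than an energy-defect count. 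A lesser but real concern is your lower bound: a DiBenedetto-type Harnack inequality is delicate here, since logarithmic diffusion is the borderline case $m\to0$ of fast diffusion $u_t=\Delta u^m$ in dimension two, where such estimates degenerate; the paper defers this bound to \cite{KS10, KS15} rather than obtaining it from Harnack.
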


We note that Theorem \ref{thm1} reproduces the convergence (\ref{ikk2}) for the case  (\ref{nrf}) with the aid of dynamical and elliptic theories. 

The first step to confirm this fact is to notice that system (\ref{ieq1})-(\ref{consv}) is provided with a Lyapunov function and $L^1$ conservation. In fact, problem  \eqref{ieq1} is written as the parabolic-elliptic system 
\begin{eqnarray}
& & u_t=\Delta (\log u-v), \label{eqn:b-1} \\
& & -\Delta v=u-\frac{1}{\left\vert \Omega\right\vert}\int_\Omega u \ dx , \ \ \int_\Omega v\ dx=0, \qquad x\in \Omega, \ \ t>0. 
 \label{eqn:b-2}
\end{eqnarray}
Notably  (\ref{eqn:b-1})-(\ref{eqn:b-2}) takes the form of model (B) equation studied by \cite{KS18, S15}, that is
\[ 
u_t=\Delta \delta\mathcal{F}(u),\quad u(x,0)=u_0(x),\quad x\in \Omega,\;t>0,\label{ele1}
\] 
where $\delta\mathcal{F}$ stands for the first variation of the functional
\bgee
\mathcal{F}(u)=\int_{\Om} u\left(\log u-1\right)\,dx+\frac{1}{2}\int_{\Om} u \cdot \Delta^{-1} u\,dx
\egee
whilst by 
\[ v=-\Delta^{-1}u \] 
we mean that $v$ satisfies \eqref{eqn:b-2} for given $u$. As a consequence it holds that 
\begin{equation} 
\frac{d}{dt}\int_\Omega u=0, \quad \frac{d}{dt}{\mathcal F}(u)=\langle u_t, \delta {\mathcal F}(u)\rangle =-\Vert \nabla \delta {\mathcal F}(u)\Vert_2^2\leq 0 
 \label{modelb}
\end{equation} 
for the solution $u=u(\cdot,t)$. 

By this variational structure of (\ref{modelb}), the steady-state $u^*$ of \eqref{eqn:b-1}-\eqref{eqn:b-2} is defined by 
\begin{equation} 
 \delta\mathcal{F}(u^*)= \log u^*+\Delta^{-1}u^*=\mbox{constant}\;,\int_{\Om} u^*\,dx=\la,\; u^*=u^*(x)>0.  
  \label{112}
\end{equation} 
For more details regarding the above formulation of the steady-state as a model (B) equation see \cite{S15}. 

Since Theorem \ref{thm1} guarantees the pre-comapcness  in $C^\infty$ topology of the orbit $\{ u(\cdot,t)\}$ of  the global-in-time solution to (\ref{ieq1})-(\ref{consv}), the following fact arises by the theory of dynamical systems, that is, the LaSalle principle  \cite{henry, S15}, where 
\begin{eqnarray*} 
 \omega(u_0)=\{ u_\ast=u_\ast(x)>0 \mid \mbox{there exists $t_k\rightarrow\infty$ such that}  
 \quad \mbox{$u(\cdot,t_k)\rightarrow u_\ast$ in $C^\infty$ topology}\} 
\end{eqnarray*} 
denotes the $\omega$-limit set. 

\begin{thm}\label{thm11}
Under the assumption of Theorem \ref{thm1},  the $\omega$-limit set $\omega(u_0)$ is non-empty, connected, and compact, contained in the set of stationary solutions denoted by 
\[ F_\lambda=\{ u^\ast=u^\ast(x)>0 \mid \mbox{classical solution to (\ref{112})} \}. \] 
\end{thm}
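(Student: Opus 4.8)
The plan is to verify the four conclusions by combining the pre-compactness of the orbit furnished by Theorem~\ref{thm1} with the variational structure \eqref{modelb}, following the classical LaSalle invariance principle. Throughout, I regard $\{u(\cdot,t)\}_{t\geq 0}$ as an orbit of the semiflow $S(t)$ generated by \eqref{eqn:b-1}--\eqref{eqn:b-2} on the phase space of positive $C^\infty$ densities with prescribed mass $\lambda$; the uniform two-sided bound \eqref{compact} renders the equation uniformly parabolic along the orbit, so that parabolic bootstrapping yields uniform bounds on all derivatives and hence relative compactness of $\{u(\cdot,t)\}$ in $C^\infty(\Omega)$.

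First I would establish non-emptiness and compactness together. Writing $\omega(u_0)=\bigcap_{s\geq 0}\overline{\{u(\cdot,t):t\geq s\}}$ as a nested intersection of nonempty closed subsets of the compact closure $\overline{\{u(\cdot,t):t\geq 0\}}$, the finite intersection property gives that $\omega(u_0)$ is nonempty and compact. Each limit point inherits the uniform lower bound $u_\ast\geq c>0$ from $\|u^{-1}(\cdot,t)\|_\infty$ being bounded, so $\omega(u_0)$ consists of strictly positive functions. Connectedness is the standard topological fact for $\omega$-limit sets of pre-compact orbits of a continuous semiflow: were $\omega(u_0)$ disconnected into two disjoint nonempty compacta separated by distance $2\delta>0$, the continuity of $t\mapsto u(\cdot,t)$ and the fact that the orbit accumulates on both pieces would force a sequence $t_k\to\infty$ with $u(\cdot,t_k)$ trapped in the $\delta$-gap; a convergent subsequence would then produce a point of $\omega(u_0)$ outside both pieces, a contradiction.

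The substantive step is the inclusion $\omega(u_0)\subseteq F_\lambda$. Here I would use that $\mathcal{F}(u(\cdot,t))$ is non-increasing by \eqref{modelb} and bounded below on the orbit (a consequence of \eqref{compact}), hence converges to a finite limit $\ell$. Since $\mathcal{F}$ is continuous in the $C^\infty$ topology, $\mathcal{F}\equiv\ell$ on $\omega(u_0)$. Invoking the invariance of $\omega(u_0)$ under $S(t)$, for any $u_\ast\in\omega(u_0)$ the whole forward orbit $S(t)u_\ast$ lies in $\omega(u_0)$, so $t\mapsto\mathcal{F}(S(t)u_\ast)\equiv\ell$ is constant; differentiating and using \eqref{modelb} gives $\|\nabla\delta\mathcal{F}(S(t)u_\ast)\|_2^2\equiv 0$, in particular $\nabla\delta\mathcal{F}(u_\ast)=0$. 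Thus $\delta\mathcal{F}(u_\ast)=\log u_\ast+\Delta^{-1}u_\ast$ is constant; together with $\int_\Omega u_\ast\,dx=\lambda$ (inherited from the conserved mass via $C^\infty$, hence $L^1$, convergence) and $u_\ast>0$, this is precisely the stationary equation \eqref{112}, so $u_\ast\in F_\lambda$.

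The main obstacle is the rigorous justification of the semiflow framework underlying the LaSalle argument: one must establish that \eqref{eqn:b-1}--\eqref{eqn:b-2} generates a continuous semiflow $S(t)$ on a suitable phase space with continuous dependence on initial data, so that the invariance of $\omega(u_0)$ is legitimate, and that $\mathcal{F}$ is genuinely continuous and bounded below there. Given the degeneracy of the logarithmic diffusion, these properties are not automatic; they rest on the uniform parabolicity supplied by \eqref{compact}, which converts the problem into a uniformly parabolic one along the orbit and thereby makes the standard dynamical-systems machinery applicable. Once this is in place, the remaining steps are routine.
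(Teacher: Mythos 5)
Your proposal is correct and follows essentially the same route as the paper: Theorem \ref{thm11} is obtained there directly from the LaSalle invariance principle (citing \cite{henry, S15}), using precisely the two ingredients you use, namely the pre-compactness of the orbit in the $C^\infty$ topology guaranteed by Theorem \ref{thm1} and the Lyapunov structure \eqref{modelb}. The paper delegates the standard dynamical-systems details (nested-intersection compactness, connectedness, invariance of $\omega(u_0)$, and the identification of $\omega$-limit points with critical points of $\mathcal{F}$) to the cited references, which is exactly what you fill in, including the pertinent remark that the uniform bounds \eqref{compact} restore uniform parabolicity and thereby legitimize the semiflow framework.
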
 

\begin{rem} 
To show  the equivalence of (\ref{stationary}) and (\ref{112}), first, put $v^\ast=-\Delta^{-1}u^\ast$ in (\ref{112}). 
Then (\ref{112}) implies  
\begin{eqnarray} 
& & -\Delta v^\ast=u^\ast-\frac{1}{\vert \Omega\vert}\int_\Omega u^\ast \ dx, \quad \int_\Omega v^\ast=0 \\ 
& & \log u^\ast=v^\ast+\mbox{constant}, \quad \int_\Omega u^\ast=\lambda 
 \label{trans2}
\end{eqnarray} 
and hence 
\begin{eqnarray}
& & -\Delta v^\ast=\lambda \left(\frac{e^{v^\ast}}{\int_{\Omega}e^{v^\ast}\,dx}-\frac{1}{|\Omega|}\right), \quad \int_{\Omega}v^\ast\,dx=0  \label{113} \\  
& & u^\ast=\frac{\lambda e^{v^\ast}}{\int_\Omega e^{v^\ast} dx}, \label{0114} 
\end{eqnarray}  
which implies (\ref{stationary}). If $u^\ast=u^\ast(x)>0$ solves (\ref{stationary}),  then (\ref{113})  arises  for 
\begin{equation} 
v^\ast=w^\ast-\frac{1}{\vert \Omega\vert}\int_\Omega w^\ast \ dx, \quad w^\ast=\log u^\ast, 
 \label{transform}
\end{equation} 
and hence (\ref{112}) holds true. Thus (\ref{stationary}) is equivalent to (\ref{112}). 
\end{rem} 

Theorem \ref{thm11} implies (\ref{ikk2}) if $F_\lambda$ is discrete, particularly, a singleton:
\begin{equation}  
F_\lambda=\left\{ \lambda/\vert \Omega\vert\right\}. 
 \label{singleton}
\end{equation} 
Under the transformation (\ref{trans2}), furthermore, property (\ref{singleton}) is equivalent to 
\begin{equation} 
E_\lambda=\{0\}, 
 \label{114}
\end{equation}  
where 
\[ E_\lambda=\{ v^\ast \mid \mbox{solution to (\ref{113})} \}. \] 
If (\ref{114}) holds for $0<\lambda\leq 8\pi$, then arises (\ref{ikk2}) with 
\begin{equation} 
u^\ast=\frac{\lambda}{\vert \Omega\vert} 
 \label{8pi} 
\end{equation} 
in (\ref{ieq1})-(\ref{consv}) by Theorem \ref{thm11}.   

We are ready to begin the second step of deriving (\ref{ikk2}) for (\ref{nrf}) by Theorem \ref{thm11}, that is, the confirmation of  (\ref{114}) for (\ref{nrf}). First of all, we note that this fact follows from a geometric property, that is, the classification of the closed surface with constant Gaussian curvature. Direct proof in the context of the elliptic theory, however, is also available \cite{ck, cl, l}. The analytic proof of  (\ref{ikk2}) for (\ref{nrf}) is thus complete.  

\begin{rem} 
Another case when (\ref{114}) is valid is for 
\begin{equation} 
\lambda=8\pi, \quad \Omega=\T\equiv \R^2/a\Z\times b\Z, \ \frac{b}{a}\geq \frac{\pi}{4} 
 \label{torus}
\end{equation} 
by the elliptic theory \cite{ll}. Hence,  (\ref{ikk2}) arises with (\ref{8pi}) and for (\ref{torus}).   
\end{rem} 
The convergence (\ref{ikk2}), however, is valid even if $E_\lambda$ forms a continuum. Furthermore, we can even determine the rate of convergence and thus Theorem \ref{thm11} is  improved as follows. 
\begin{thm}\label{thm2}
Under the assumption of Theorem \ref{thm1} there is a solution $u^*=u^\ast(x)>0$ to  (\ref{stationary}) so that  (\ref{ikk2}) is valid. The rate of this convergence  is at least algebraic. 
\end{thm}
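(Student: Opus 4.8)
The plan is to exploit the gradient-flow structure recorded in \eqref{modelb} together with a {\L}ojasiewicz--Simon gradient inequality for the energy $\mathcal{F}$, and then run the classical trajectory-length argument of Simon to upgrade the subsequential convergence of Theorem \ref{thm11} to convergence of the whole orbit toward a single equilibrium, with a quantitative rate. Write $\mu(u)=\delta\mathcal{F}(u)=\log u+\Delta^{-1}u$ for the chemical potential, so that \eqref{eqn:b-1} reads $u_t=\Delta\mu(u)$ and \eqref{modelb} gives $\frac{d}{dt}\mathcal{F}(u)=-\Vert\nabla\mu(u)\Vert_2^2$. By Theorem \ref{thm1} the orbit is precompact in $C^\infty$ and, by \eqref{compact}, stays in a fixed compact set of strictly positive functions; hence all manipulations below (in particular $\log u$) are legitimate and uniform in $t$.

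The decisive ingredient is the following {\L}ojasiewicz--Simon inequality: for each $u^\ast\in\omega(u_0)\subset F_\lambda$ there are constants $\theta\in(0,\tfrac12]$, $C>0$ and $\sigma>0$ such that
\begin{equation}
\vert \mathcal{F}(u)-\mathcal{F}(u^\ast)\vert^{\,1-\theta}\le C\,\Vert\nabla\mu(u)\Vert_2
\qquad\text{whenever }\Vert u-u^\ast\Vert_{C^{2,\alpha}}<\sigma,\ \int_\Omega u\,dx=\lambda. \label{prop:LS}
\end{equation}
To prove \eqref{prop:LS} I would verify the three standard hypotheses of the abstract {\L}ojasiewicz--Simon theorem (in the form adapted to mass-conserving, $H^{-1}$-type gradient flows): (i) $\mathcal{F}$ is real-analytic on the affine constraint set $\{\int_\Omega u=\lambda,\ u>0\}$ --- the entropy $\int_\Omega u(\log u-1)$ is analytic on positive functions bounded away from $0$, and the quadratic nonlocal term is trivially analytic; (ii) the linearization $\mathcal{F}''(u^\ast)$, acting on the tangent space $\{\phi:\int_\Omega\phi\,dx=0\}$, is Fredholm of index zero --- here $\mathcal{F}''(u^\ast)\phi=\phi/u^\ast+\Delta^{-1}\phi$, whose principal part $\phi\mapsto\phi/u^\ast$ is an isomorphism perturbed by the compact operator $\Delta^{-1}$, so Fredholmness follows from elliptic theory on the closed surface $\Omega$; (iii) the gradient $\nabla\mu$ and $\mathcal{F}''$ satisfy the requisite mapping and analyticity estimates. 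Passing through the equivalent formulation \eqref{113} in the variable $v^\ast$ (equivalently, working with the mean-field functional whose Euler--Lagrange equation is \eqref{113}) may simplify checking (ii). This step --- establishing \eqref{prop:LS}, i.e.\ controlling the possibly degenerate critical manifold through analyticity and the Fredholm alternative --- is the main obstacle; everything afterward is soft.

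Granting \eqref{prop:LS}, I would argue as follows. Since $\mathcal{F}$ is a strict Lyapunov function and $\omega(u_0)$ is connected (Theorem \ref{thm11}), $\mathcal{F}$ is constant on $\omega(u_0)$, say $\equiv\mathcal{F}(u^\ast)=:\mathcal{F}_\infty$, and $\mathcal{E}(t):=\mathcal{F}(u(\cdot,t))-\mathcal{F}_\infty\searrow 0$. Fix $u^\ast\in\omega(u_0)$ and a time $t_0$ with $u(\cdot,t_0)$ in the $\sigma$-ball of \eqref{prop:LS}; because $u(\cdot,t_k)\to u^\ast$ along some sequence and $\mathcal{E}(t)\to0$, a standard continuity/contradiction argument shows the orbit cannot leave a slightly larger ball thereafter. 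On this set, using $u_t=\Delta\mu(u)$ one has $\Vert u_t\Vert_{H^{-1}}\le\Vert\nabla\mu(u)\Vert_2$, while \eqref{modelb} and \eqref{prop:LS} give
\begin{equation}
-\frac{d}{dt}\,\mathcal{E}(t)^{\theta}
=\theta\,\mathcal{E}(t)^{\theta-1}\Vert\nabla\mu(u)\Vert_2^2
\ge \theta C^{-1}\,\Vert\nabla\mu(u)\Vert_2
\ge \theta C^{-1}\,\Vert u_t\Vert_{H^{-1}}. \label{key-diff}
\end{equation}
Integrating \eqref{key-diff} over $(t_0,\infty)$ yields $\int_{t_0}^\infty\Vert u_t\Vert_{H^{-1}}\,dt\le \theta^{-1}C\,\mathcal{E}(t_0)^{\theta}<\infty$, so $u(\cdot,t)$ is Cauchy in $H^{-1}$ and converges there; by the $C^\infty$-precompactness of Theorem \ref{thm1} this limit is a genuine $C^\infty$-limit $u^\ast$, and \eqref{ikk2} follows. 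Finally, for the rate I combine \eqref{modelb} with \eqref{prop:LS} into $\dot{\mathcal{E}}=-\Vert\nabla\mu(u)\Vert_2^2\le -C^{-2}\mathcal{E}^{\,2(1-\theta)}$. If $\theta=\tfrac12$ (the non-degenerate case, when $u^\ast$ is a hyperbolic critical point and \eqref{prop:LS} holds with exponent $\tfrac12$) this gives $\mathcal{E}(t)\le Ce^{-ct}$ and, via the length estimate $\Vert u(\cdot,t)-u^\ast\Vert_{H^{-1}}\le\int_t^\infty\Vert u_s\Vert_{H^{-1}}\,ds\le \theta^{-1}C\,\mathcal{E}(t)^{\theta}$, exponential convergence; if $\theta\in(0,\tfrac12)$ (the degenerate case) integration gives $\mathcal{E}(t)\le C\,t^{-1/(1-2\theta)}$, whence $\Vert u(\cdot,t)-u^\ast\Vert_{H^{-1}}\le C\,t^{-\theta/(1-2\theta)}$. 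In either case, interpolating this decaying $H^{-1}$-norm against the uniform $C^\infty$-bound of \eqref{compact} transfers the rate to every $C^k$-norm, so the convergence in \eqref{ikk2} is at least algebraic, as claimed.
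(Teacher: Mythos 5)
Your overall strategy is exactly the paper's own: a {\L}ojasiewicz--Simon gradient inequality combined with Simon's trapping/trajectory-length argument, ending in the decay rate $t^{-\theta/(1-2\theta)}$ for $0<\theta<\tfrac12$ and $e^{-ct}$ for $\theta=\tfrac12$. The difference is the functional frame. You stay in the variable $u$ and use the model-(B) structure \eqref{modelb}, so the dissipation is $\Vert \nabla \mu(u)\Vert_2^2$ with $\mu=\delta\mathcal{F}(u)$ and the trajectory length is measured in $H^{-1}$; the paper instead sets $w=\log u$ and works with $\mathcal{E}(w)$ of \eqref{functional} on $V=H^1(\Omega)$, where the flow reads $e^w w_t=-\delta\mathcal{E}(w)$, the dissipation is comparable to $\Vert w_t\Vert_2^2$, and both the neighbourhood and the gradient in Theorem \ref{thm3} are measured in the dual pair $V$, $V^\ast$. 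Everything downstream of your proposed inequality is sound: the bound $\Vert u_t\Vert_{H^{-1}}\leq \Vert\nabla\mu(u)\Vert_2$, the trapping argument (your appeal to $C^\infty$-precompactness legitimately upgrades $H^{-1}$-closeness to $C^{2,\alpha}$-closeness on the orbit closure, which replaces the paper's parabolic-regularity Lemma \ref{lem8}), the finite-length/Cauchy argument, and the interpolation transferring the $H^{-1}$ rate to every $C^k$ norm (with a loss of exponent that is harmless for ``at least algebraic'').

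The genuine gap is the {\L}ojasiewicz--Simon inequality itself, which you defer to ``the abstract theorem'' plus a hypothesis check. That inequality is the entire content of the paper's Section \ref{sec2}, and in your formulation it is strictly harder than a hypothesis check: your ball is taken in $C^{2,\alpha}$ while the gradient is measured in the $H^{-1}$-type norm $\Vert\nabla\mu(u)\Vert_2$, and there is \emph{no} Hilbert space on which $\mathcal{F}$ is analytic and whose dual norm is $\Vert\nabla\mu(u)\Vert_2$ --- the entropy $\int_\Omega u\log u\,dx$ is not even defined on $L^2$ or $H^{-1}$ neighbourhoods of $u^\ast$. So the same-norm Hilbert-space theorem (Chill's result, Theorem \ref{ch1}, which the paper invokes) does not apply as stated; one needs either the refined two-norm versions of the theory (as developed for Cahn--Hilliard-type $H^{-1}$ gradient flows) or a bespoke Lyapunov--Schmidt/critical-manifold argument keeping the weak norm on the right-hand side. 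Your Fredholm computation $\mathcal{F}''(u^\ast)\phi=\phi/u^\ast+\Delta^{-1}\phi$ is correct and is the right starting point, but carrying out the reduction in this mismatched-norm setting is precisely the work you have skipped, and you acknowledge it is ``the main obstacle.'' The paper's change of variables $u=e^w$ is exactly the device that dissolves this obstruction: by the Trudinger--Moser--Fontana inequality \eqref{ftm}, $\mathcal{E}$ is analytic on all of $V=H^1(\Omega)$ with $\delta\mathcal{E}:V\to V^\ast$, so neighbourhood and gradient live in dual norms and the standard critical-manifold machinery yields Theorem \ref{thm3} directly. To complete your proof you must either establish your two-norm inequality in full or transplant the problem into the $w$-variable as the paper does; as written, the central lemma is asserted rather than proven.
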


\begin{thm}\label{thm14}
If $u^*$ is non-degenerate in the previous theorem, the rate of convergence in (\ref{ikk2}) is exponential. 
\end{thm}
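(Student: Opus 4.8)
The plan is to sharpen the {\L}ojasiewicz--Simon gradient inequality that underlies Theorem~\ref{thm2} so that its exponent attains the optimal value $\theta=1/2$; this upgrades the algebraic energy decay to exponential decay, which then propagates to an exponential rate in the convergence (\ref{ikk2}). Throughout I work with the energy gap $\mathcal{H}(t)=\mathcal{F}(u(\cdot,t))-\mathcal{F}(u^\ast)\ge 0$ and with the dissipation identity $\dot{\mathcal{H}}(t)=-\Vert\nabla\delta\mathcal{F}(u)\Vert_2^2$ from (\ref{modelb}).

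First I would record the analytic consequence of non-degeneracy. The second variation of $\mathcal{F}$ at $u^\ast$ is the linear operator $L\phi=\phi/u^\ast+\Delta^{-1}\phi$ acting on the constrained space $\{\phi:\int_\Omega\phi\,dx=0\}$, and non-degeneracy of $u^\ast$ means precisely that $L$ has trivial kernel there. Because (\ref{compact}) keeps $u^\ast$ bounded above and below, the multiplication $\phi\mapsto\phi/u^\ast$ is a bounded isomorphism and $\Delta^{-1}$ is compact, so $L$ is a Fredholm operator of index zero; trivial kernel therefore forces $L$ to be an isomorphism of the constrained space. In the $v$-formulation this is the standard non-degeneracy of the mean-field solution $v^\ast$ of (\ref{113}).

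Next I would invoke the analytic {\L}ojasiewicz--Simon theory. Since $u^\ast$ is bounded away from $0$ and $\infty$, the functional $\mathcal{F}$ is real-analytic near $u^\ast$ on the relevant function space, and the gradient it induces through the $\dot H^{-1}$ metric of model (B) satisfies $\Vert\nabla\delta\mathcal{F}(u)\Vert_2=\Vert u_t\Vert_{\dot H^{-1}}$. When the Hessian $L$ is an isomorphism, the theory delivers the inequality with exponent $1/2$: there are $C>0$ and a neighborhood $U$ of $u^\ast$ such that $\Vert\nabla\delta\mathcal{F}(u)\Vert_2\ge C\,|\mathcal{F}(u)-\mathcal{F}(u^\ast)|^{1/2}$ for all $u\in U$. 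As Theorem~\ref{thm2} already places $u(\cdot,t)$ in $U$ for large $t$, squaring and combining with the dissipation yields $\dot{\mathcal{H}}\le -C^2\mathcal{H}$, hence $\mathcal{H}(t)\le\mathcal{H}(t_0)\,e^{-C^2(t-t_0)}$.

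Finally I would convert exponential energy decay into an exponential rate in (\ref{ikk2}). Setting $\phi(t)=\mathcal{H}(t)^{1/2}$, the same inequality gives $\Vert u_t\Vert_{\dot H^{-1}}\le-\tfrac{2}{C}\dot\phi(t)$, so that $\int_t^\infty\Vert u_\tau\Vert_{\dot H^{-1}}\,d\tau\le\tfrac{2}{C}\mathcal{H}(t)^{1/2}$ decays exponentially; since $u(\cdot,\tau)\to u^\ast$, this bounds $\Vert u(\cdot,t)-u^\ast\Vert_{\dot H^{-1}}$ and forces it to decay exponentially. I would then bootstrap this weak-norm estimate to the $C^\infty$ topology by means of the uniform bounds of Theorem~\ref{thm1}, parabolic smoothing, and interpolation, so that the exponential rate survives in (\ref{ikk2}). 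The main obstacle I anticipate is establishing the sharp exponent $\theta=1/2$ rigorously, namely verifying the analyticity and Fredholm hypotheses of the {\L}ojasiewicz--Simon framework in the correct function spaces and confirming that non-degeneracy genuinely forces $L$ to be invertible on the constrained space; a secondary difficulty is arranging the bootstrap from $\dot H^{-1}$ to $C^\infty$ so that it preserves the exponential rate quantitatively.
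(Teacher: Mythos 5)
Your plan is correct and follows the same skeleton as the paper's proof: non-degeneracy forces the {\L}ojasiewicz exponent to $\theta=\tfrac12$, the dissipation identity then yields exponential decay of the energy gap, integrating $\Vert u_t\Vert$ in time converts this into an exponential rate in a weak norm, and parabolic smoothing propagates the rate to the $C^\infty$ topology. The implementation, however, is genuinely different. The paper works in $w=\log u$ with $\mathcal{E}$ on $V=H^1(\Omega)$: there the functional is analytic on all of $V$ (by Trudinger--Moser--Fontana), but mass conservation becomes the nonlinear constraint $\int_\Omega e^w\,dx=\lambda$, so a separate lemma (Lemma \ref{lem0a}) is needed to transfer coercivity of $\mathcal{M}=-\Delta-u^*$ from the linear subspace $\{\phi:\int_\Omega u^*\phi\,dx=0\}$ to the constraint set, and $\theta=\tfrac12$ is then proved by hand (Lemma \ref{lem51}: expansion of $\delta\mathcal{E}$ around $w^*$ plus the quadratic bound of Lemma \ref{lem2}). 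You work instead with $u$, $\mathcal{F}$ and the $\dot H^{-1}$ structure of (\ref{modelb}), where the conserved quantity is \emph{linear} in $u$; hence $u-u^*$ sits exactly in the constrained space and no analogue of Lemma \ref{lem0a} is needed---a genuine simplification. On the other hand, your appeal to the abstract analytic {\L}ojasiewicz--Simon theorem is both heavier than necessary and your weakest link: the entropy term of $\mathcal{F}$ is not analytic on an $L^2$ or $\dot H^{-1}$ neighbourhood of $u^*$ (only where $u$ is uniformly bounded above and below), whereas once the Hessian is invertible no analyticity is needed at all---the elementary Taylor argument, which is precisely what the paper runs in Section \ref{sec5}, gives $\theta=\tfrac12$ directly, and it works in your setting because $\Vert\nabla\delta\mathcal{F}(u)\Vert_2$ controls $\Vert u-u^*\Vert_2$ by Poincar\'e's inequality plus invertibility of the projected Hessian, the orbit being uniformly bounded by (\ref{compact}).

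Two details of your sketch need repair, though neither is fatal. First, the kernel of the Hessian on the constrained space is not $\{\phi:\int_\Omega\phi\,dx=0,\ L\phi=0\}$: because of the Lagrange multiplier attached to the mass constraint, degeneracy means there exists a nonzero mean-zero $\phi$ with $L\phi=\mathrm{constant}$, and it is this condition which, under the substitution $\psi=\phi/u^*$, is equivalent to the paper's criterion (\ref{ikk39}); your Fredholm index-zero argument survives after composing with the projection onto mean-zero functions, but the kernel condition as you state it is the wrong one. Second, the equivalence of your notion of non-degeneracy with the paper's (defined through $J_\lambda$ on $V_0$) is asserted rather than proved; this is exactly the content of Lemma \ref{lem0} and must be supplied, since the hypothesis of the theorem is formulated in the $v$-variable while your linearization acts on mean-zero densities.
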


To define the non-degeneracy of the steady-state  $u^\ast$, we use the fact that $v^\ast$ defined by (\ref{transform}) is a solution to (\ref{113}), which is the Euler-Langange equation of the energy functional 
\begin{equation} 
J_{\la}(v)=\frac{1}{2} ||\nabla v||_2^2-\la \log \int_{\Om} e^v\,dx
 \label{functionalj}
\end{equation} 
defined for $v\in V_0$, where
\begin{equation}  
V_0=\{v\in H^1(\Om) \mid \int_{\Om} v \ dx=0\}. 
 \label{functionspace}
\end{equation} 
Thus we say that $u^\ast=u^\ast(x)>0$ is non-degenerate  in Theorem \ref{thm14}, if $v^\ast\in V_0$ defined by (\ref{transform}) is non-degenerate as a critical point of $J_\lambda$ on $V_0$. Later in Lemma \ref{lem0}, we show that this non-degeneracy of $u^\ast=u^\ast(x)>0$ means that 
\begin{equation} \label{ikk39}
\psi \in H^2(\Omega), \ -\Delta \psi=u^*\psi \ \mbox{in $\Om$}, \ \int_{\Om} \psi u^*\,dx=0 \quad \Rightarrow \quad \psi=0. 
\end{equation} 

\begin{rem} 
We can regard (\ref{113}) as a nonlinear eigenvalue problem of finding $(\lambda, v^\ast)$ similtaneously. Then, if $\Omega=\S^2$, non-trivial solutions bifurcate at $\lambda=8\pi$ from the branch of trivial solutions $\{(\lambda, v^\ast)\mid \lambda\in \R, \ v^\ast=0\}$. Hence we cannot apply Theorem \ref{thm14} for this case, but still have the rate at least of algebraic order in (\ref{ikk2}) with (\ref{8pi}) for $\lambda=8\pi$. In the case of $\Omega=\R^2/a\Z\times b\Z$ with $\frac{b}{a}\geq \frac{\pi}{4}$, on the other hand, $\lambda=8\pi$ is not a bifurcation point of the non-trivial solution, and $v^\ast=0$ is still non-degnerate at this value of $\lambda=8\pi$. Hence in the case of  a torus given by (\ref{torus}), there holds (\ref{ikk2}), with (\ref{8pi}) for $\lambda=8\pi$, in the exponential rate. 
\end{rem} 

Theorem \ref{thm1}, without geometric structure for  problem (\ref{ieq1})-(\ref{consv}), is proven as follows. First, the range  $0<\lambda<8\pi$ of this problem is sub-critical in accordance with the Trudinger-Moser-Fontana inequality \cite{f} 
\begin{equation}\label{ftm}
v\in V_0, \ \Vert \nabla v\Vert_2\leq 1 \quad \Rightarrow \quad \int_{\Om} e^{4\pi v^2}\,dx\leq C 
\end{equation} 
which entails 
\[ 
\inf\{J_{8\pi}(v)\Big| v\in V_0\}>-\infty 
\] 
as in \cite{S15}. Hence Moser's iteration ensures $T=+\infty$ and (\ref{compact}) for $0<\lambda<8\pi$, under 
\[ \Vert u(\cdot,t)\Vert_1=\lambda, \quad \mathcal{F}(u(\cdot,t))\leq \mathcal{F}(u_0) \] 
derived from (\ref{modelb}), cf. \cite{KS10}.  On the other hand, Benilan-Crandall's inequality
\[ 
\frac{u_t(x,t)}{u(x,t)}\leq \frac{e^t}{e^t-1}
\] 
is used to confirm $T=+\infty$ for $\lambda=8\pi$ (\cite{KS10}). To derive (\ref{compact}) for $\la=8 \pi$,  we finally appeal to a concentration compactness argument, cf.  \cite{KS15}. 

\begin{rem}
An immediate consequence of (\ref{ftm}) is that any $K>0$ admits $C(K)>0$ such that 
\[ v\in V=H^1(\Omega), \ \Vert v\Vert_V\leq K \quad \Rightarrow \quad \Vert e^{\vert v\vert}\Vert_1\leq C(K), \] 
where $\Vert v\Vert_V=(\Vert v\Vert_2^2+\Vert \nabla v\Vert_2^2)^{1/2}$. 
\end{rem} 

The main aim of the current workfor is to provide an analytic  proof of Theorems \ref{thm2}-\ref{thm14},  by using  a gradient inequality  which takes the following classical form in the finite dimensional case. 

\begin{lem}[\cite{L63}]
Let $E=E(x):\R^n\rightarrow \R$ be real-analytic at $x=0$, satisfying $E(0)=0$ and $\delta E(0)=0$.  Then there is $0<\theta\leq \frac{1}{2}$ such that 
\[ \vert E(x)\vert^{1-\theta}\leq C\vert \delta E(x)\vert, \quad \vert x\vert \ll 1. \] 
 \label{cgi}
\end{lem}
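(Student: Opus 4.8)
The plan is to prove the equivalent lower bound $|\delta E(x)|\ge c\,|E(x)|^{1-\theta}$ on a neighborhood of $0$, and to extract the admissible exponent $\theta$ from the local analytic geometry of $E$ at its critical point. The first thing I would record is that the hypotheses $E(0)=0$ and $\delta E(0)=0$ force $E$ to vanish to order at least two, i.e. $|E(x)|\le M|x|^2$ near $0$; this is precisely what will pin the exponent to the range $\theta\le\frac12$. For intuition, in the nondegenerate (Morse) case $E(x)=\frac12\langle \delta^2 E(0)x,x\rangle+O(|x|^3)$ with $\delta^2 E(0)$ invertible, one has $|E(x)|\sim|x|^2$ and $|\delta E(x)|\sim|x|$, so the inequality holds with the sharp value $\theta=\frac12$; degeneracy of the critical point only decreases the exponent.

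For the general (possibly degenerate) analytic case I would argue by contradiction through the curve selection lemma of subanalytic geometry. Fixing a candidate $\theta$, suppose that for every $c>0$ the set $\{x:\ |\delta E(x)|^2\le c^2|E(x)|^{2(1-\theta)}\}$ accumulates at $0$. The curve selection lemma then yields a real-analytic curve $\gamma:[0,\varepsilon)\to\R^n$ with $\gamma(0)=0$, $\gamma(t)\ne0$ for $t>0$, along which
\[
\frac{|\delta E(\gamma(t))|}{|E(\gamma(t))|^{1-\theta}}\longrightarrow 0,\qquad t\downarrow0 .
\]
On this curve $E$ cannot vanish identically (otherwise the ratio is undefined precisely where the inequality is violated), so $g(t):=E(\gamma(t))$ has a Puiseux expansion $g(t)=a\,t^{p}+\cdots$ with $a\ne0$ and rational $p>0$, while $\gamma'(t)$ has leading exponent $q-1$, where $q\ge1$ is the order of vanishing of $\gamma$. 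The chain rule $g'(t)=\delta E(\gamma(t))\cdot\gamma'(t)$ together with Cauchy--Schwarz gives
\[
|\delta E(\gamma(t))|\ \ge\ \frac{|g'(t)|}{|\gamma'(t)|}\ \gtrsim\ \frac{t^{\,p-1}}{t^{\,q-1}}\ =\ t^{\,p-q},
\]
whereas $|E(\gamma(t))|^{1-\theta}\sim t^{\,p(1-\theta)}$. The assumed decay of the ratio forces $p\theta-q>0$, i.e. $\theta>q/p$. Moreover the order bound $|E|\le M|x|^2$ gives $|g(t)|\lesssim|\gamma(t)|^2\sim t^{2q}$, hence $p\ge 2q$ along \emph{every} curve, so $q/p\le\frac12$. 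Thus, if one fixes $\theta:=\inf_\gamma(q/p)$ --- a number in $(0,\frac12]$ by the order bound, and positive by the finiteness theorem below --- then every extracted curve obeys $q/p\ge\theta$, contradicting $\theta>q/p$; hence no bad curve exists and the desired estimate holds uniformly near $0$.

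The crux of the argument, and its main obstacle, is the input from real-analytic geometry: the curve selection lemma, and the fact that the {\L}ojasiewicz exponent $\inf_\gamma(q/p)$ is strictly positive and attained. Both rest on the local structure theory of semianalytic and subanalytic sets, ultimately on Hironaka's resolution of singularities (equivalently, on {\L}ojasiewicz's stratification). Indeed, after a proper analytic modification $\sigma$ one may bring $E\circ\sigma$ into the normal form $(\text{unit})\cdot(\text{monomial})$, for which the gradient estimate is elementary, and then pull the inequality back through $\sigma$; this is the route I would follow to make the exponent uniform rather than merely curve-by-curve. The remaining steps --- verifying the Puiseux asymptotics, the uniformity of the extracted exponent, and that $\theta$ can be taken in $(0,\frac12]$ --- are then routine.
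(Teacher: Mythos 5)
The paper does not prove Lemma \ref{cgi} at all: it is quoted from {\L}ojasiewicz \cite{L63} and used as a black box (indeed it is only invoked through Lemma \ref{lem7}, applied to the restriction of $\mathcal{E}$ to the finite-dimensional analytic manifold $\mathcal{S}$). So there is no in-paper proof to compare against; your attempt has to be measured against the classical argument it is reconstructing.

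Your sketch reproduces the standard architecture correctly --- curve selection, Taylor/Puiseux asymptotics along an arc, the chain-rule bound $|\delta E(\gamma(t))|\ge |g'(t)|/|\gamma'(t)|$, second-order vanishing forcing $q/p\le\frac12$ --- and those individual computations are right. But as a proof it has a genuine gap, located exactly where you flag it yourself: you import as an ``input from real-analytic geometry'' the fact that $\inf_\gamma (q/p)$ is strictly positive. That is not an independent, citable fact sitting below the lemma; it is equivalent to the lemma (the gradient inequality immediately gives $q/p\ge\theta>0$ along every arc, and your own argument shows the converse modulo curve selection), so the curve-selection half of the proposal reduces the statement to itself. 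There is also a technical slip in how curve selection is invoked: it applies to a fixed semianalytic set, say $A_c=\{\,x :\ |\delta E(x)|^2< c^2\,|E(x)|^{2(1-\theta)}\,\}$ with $\theta$ rational, and it yields an arc lying in $A_c$, along which one only gets the non-strict conclusion $\theta\ge q/p$ --- not an arc along which the ratio tends to $0$. This matters for your contradiction, since with $\theta:=\inf_\gamma(q/p)$ the case of equality $q/p=\theta$ is not contradictory; one must instead take $\theta$ rational and strictly below the infimum. Consequently the only self-contained route you offer is the resolution-of-singularities one, and there the two steps you call ``elementary''/``routine'' are where the theorem actually lives: for a normal-crossing representative $u(y)\,y^\alpha$ one must exhibit the exponent (take $\theta=1/|\alpha|$, using $\min_i|y_i|\le \prod_j |y_j|^{\alpha_j/|\alpha|}$ since the exponents sum to $1$), and the descent to $E$ must use that $\Vert D\sigma\Vert$ is locally bounded and $\sigma$ is proper and surjective, so that $|E\circ\sigma|^{1-\theta}\le C\,|\delta(E\circ\sigma)|\le C'\,(|\delta E|\circ\sigma)$ pushes down to a neighborhood of $0$, with uniform constants obtained from the compactness of $\sigma^{-1}(0)$. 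With those two points written out your outline becomes a correct and standard proof; without them it is a reduction of the lemma to an equivalent statement, plus a citation of Hironaka.
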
 

We  thus provide an alternative proof of  Hamilton's convergence result in \cite{ha}  which is entirely  based on the parabolic theory; that is no use of  any geometric or  elliptic structures of \eqref{ieq1}-\eqref{consv} is made. Furthermore, our proof assures (\ref{ikk2}) for any $0<\lambda\leq 8\pi$ and a general comapct manifold $\Omega$ without boundary and it also shows that the convergence rate is at least algebraic and exponential, provided that $u^\ast=u^\ast(x)>0$ is degenerate and non-degenerate  steady-state respectively.

This paper is composed of five sections. Section \ref{sec2} is devoted to the key concept of critical manifold developed in \cite{Ch03, Ch06}. Then, Theorem \ref{thm2} is proven in Section \ref{sec3}, employing the method presented  in  \cite{HJ01, S83}. Section \ref{sec4} is devoted to the study of  non-degenerate steady-state solution, and finally Theorem \ref{thm14} is proven in Section \ref{sec5}.

\vspace{2mm}

{\bf Notations.} In the sequel $||\cdot ||_p$ denotes the $L^p(\Omega)$-norm for $1\leq p\leq \infty$. 
The letter $C$ denotes inessential constants which may vary from line to line. The dependence of $C$  upon parameters is indicated explicitly.

\vspace{2mm}

\section{Theory of Critical Manifolds}\label{sec2}

Under the change of variables  $u=e^w$, problem  \eqref{ieq1}-\eqref{consv} is reduced to
\begin{eqnarray}
& & \frac{\partial e^w}{\partial t}=\Delta w + \lambda\left(\frac{e^w}{\int_{\Omega}e^w\,dx}-\frac{1}{|\Omega|}\right), \quad x\in \Omega,\;\;t>0,\label{gnrf1}\\
& & w(x,0)=w_0(x), \quad x\in \Omega. 
 \label{gnrf2}
\end{eqnarray}
Integrating equation \eqref{gnrf1} over $\Omega$, taking also into account that $\partial\Omega=\emptyset,$  we obtain the total mass conservation
\[ 
\int_{\Omega} e^{w}\, dx=\int_{\Omega} e^{w_0}\, dx=\lambda. 
\] 
Hence it holds that 
\begin{eqnarray} 
&&\frac{\partial e^w}{\partial t}=\Delta w + e^w-\frac{\la}{|\Omega|}, \quad x\in \Omega,\;\;t>0\label{mmk1}\\
&&w(x,0)=w_0(x), \quad x\in \Omega,\quad\int_{\Omega} e^{w_0}\, dx=\lambda, \label{mmk2}
\end{eqnarray} 

and a related variational functional is
\bge
\mathcal{E}(w) =  \int_{\Om} \frac{1}{2} |\nabla w|^2- e^{w}+\frac{\la}{|\Om|}w \ dx,\quad w\in H^1(\Om)=V.  
\label{functional}
\ege
In relation to the Gel'fand triple
\[ 
V=H^1(\Om)\hookrightarrow X=L^2(\Om)\cong X^{*} \hookrightarrow V^{*}, 
\] 
the first variation of $\mathcal{E}(w)$ is given by
\bge\label{fv}
\delta \mathcal{E}(w)=-\Delta w-e^w+\frac{\la}{|\Om|}
\ege
and thus \eqref{mmk1}-\eqref{mmk2} is reduced to
\bge
&&\frac{\partial e^w}{\partial t}=-\delta \mathcal{E}(w), \quad x\in \Omega,\;\;t>0\label{dmmk1}\\
&&w(x,0)=w_0(x), \quad x\in \Omega,\quad\int_{\Omega} e^{w_0}\, dx=\lambda.\label{dmmk2}
\ege
Each steady-state $u^\ast \in F_\lambda$ to (\ref{ieq1})-(\ref{consv}) is a solution to (\ref{stationary}), and hence $w^*=\log u^\ast$  satisfies 
\[ \Delta w^\ast+e^{w^\ast}-\frac{\lambda}{\vert \Omega\vert}=0, \quad x\in \Omega, \] 
or equivalently,  
\[  
\delta \mathcal{E}(w^*)=0. 
\]  

This section is devoted to the proof of the following inequality, which casts a basis  for proving Theorem \ref{thm2} as in the standard theory of gradient inequality, cf. \cite{S83}. 

\begin{thm}\label{thm3}
Given $w^*\in V$ satisfying  $\delta \mathcal{E}(w^\ast)=0$, there exist $0<\theta\leq \frac{1}{2}$ and $\ve_0>0$ such that 
\begin{equation} 
w\in V, \ ||w-w^*||_{V}<\ve_0 \ \Rightarrow \ |\mathcal{E}(w)-\mathcal{E}(w^*)|^{1-\theta}\leq C||\delta \mathcal{E}(w)||_{V^*}.
 \label{theta} 
\end{equation} 
\end{thm}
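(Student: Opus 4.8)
The plan is to recognize (\ref{theta}) as an instance of the abstract {\L}ojasiewicz--Simon gradient inequality and to verify the hypotheses under which the general theorem of \cite{Ch03, Ch06} (in the spirit of \cite{S83, HJ01}) applies. Working in the Gel'fand triple $V=H^1(\Omega)\hookrightarrow L^2(\Omega)\hookrightarrow V^*$, I would regard $M(w):=\delta\mathcal{E}(w)=-\Delta w-e^w+\frac{\lambda}{|\Omega|}$ as the gradient map $M:V\to V^*$, with $w^*$ a zero of $M$. The abstract result reduces, via a Lyapunov--Schmidt argument, to the finite-dimensional {\L}ojasiewicz inequality of Lemma \ref{cgi}, and it rests on two structural ingredients: (a) $\mathcal{E}$ is real-analytic (equivalently, $M$ is analytic) in a $V$-neighborhood of $w^*$; and (b) the linearization $L:=\delta^2\mathcal{E}(w^*)=-\Delta-e^{w^*}\,\cdot\,$ is a self-adjoint Fredholm operator of index zero from $V$ into $V^*$. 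I would establish (a) and (b) and then invoke the abstract theorem.

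For analyticity, the quadratic and linear parts of $\mathcal{E}$ are polynomials in $w$ and hence entire; the only nontrivial contribution is the exponential term $\int_\Omega e^w\,dx$. Here the two-dimensionality is the delicate point, because $H^1(\Omega)\not\hookrightarrow L^\infty(\Omega)$ and so $w\mapsto e^w$ cannot be treated as a Nemytskii operator between classical spaces. Instead I would expand $e^w=\sum_{n\ge 0} w^n/n!$ and show that the associated symmetric multilinear forms $(\phi_1,\dots,\phi_n)\mapsto\int_\Omega\phi_1\cdots\phi_n\,dx$ are bounded on $V$ with operator norms summable against $1/n!$, so that the Taylor series converges in a $V$-neighborhood of $w^*$. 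The needed estimates follow from the Trudinger--Moser--Fontana inequality (\ref{ftm}), in the form $\|e^{|v|}\|_1\le C(K)$ for $\|v\|_V\le K$ recorded in the Remark after it; the same bounds show that $w\mapsto e^w$ is analytic as a map $V\to V^*$, giving analyticity of $M$.

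For the Fredholm property, I would write $-\Delta=(-\Delta+\mathrm{Id})-\mathrm{Id}$, where $-\Delta+\mathrm{Id}:V\to V^*$ is the Riesz isomorphism for the $H^1$ inner product on the compact boundaryless manifold $\Omega$. Both the embedding $\mathrm{Id}:V\hookrightarrow V^*$ (factoring through the compact embedding $H^1\hookrightarrow L^2$) and multiplication by $e^{w^*}$ are compact operators $V\to V^*$, the latter again by the exponential integrability $e^{w^*}\in L^p(\Omega)$ for all finite $p$. Hence $L$ is an isomorphism perturbed by a compact operator, so it is Fredholm of index zero, while symmetry of the bilinear form $\int_\Omega\nabla\phi\cdot\nabla\psi-e^{w^*}\phi\psi\,dx$ makes $L$ self-adjoint.

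With (a) and (b) in hand, the conclusion follows from the abstract machinery. Setting $N=\ker L$ (finite-dimensional) and letting $P$ denote the $L^2$-orthogonal projection onto $N$, the analytic implicit function theorem solves $(\mathrm{Id}-P)M(w^*+v)=0$ for the complementary component $(\mathrm{Id}-P)v$ as an analytic function of $Pv$, since $L$ restricted to $(\mathrm{Id}-P)V$ is invertible. Substituting back produces a finite-dimensional real-analytic function $\Gamma$ on $N$ near the origin, with $\Gamma(0)=\mathcal{E}(w^*)$ and $\nabla\Gamma(0)=0$, whose increment reproduces $\mathcal{E}(w)-\mathcal{E}(w^*)$ up to controllable errors and whose differential bounds $\|M(w)\|_{V^*}$ from below. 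Applying the classical {\L}ojasiewicz inequality of Lemma \ref{cgi} to $\Gamma$ then yields (\ref{theta}) for some $\theta\in(0,\frac{1}{2}]$. The main obstacle is precisely the analyticity step: one must upgrade the Trudinger--Moser--Fontana bound into uniform control of the entire Taylor expansion of the exponential nonlinearity in the $H^1$-into-$V^*$ setting, without recourse to any pointwise or $L^\infty$ estimate, whereas the Fredholm and reduction steps are comparatively routine once the Gel'fand-triple framework is fixed.
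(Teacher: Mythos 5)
Your proposal is correct and follows essentially the same route as the paper: analyticity of the exponential term via the Trudinger--Moser--Fontana inequality, a Lyapunov--Schmidt reduction onto the finite-dimensional kernel of $\mathcal{L}=-\Delta-e^{w^*}$ by the analytic implicit function theorem (Chill's critical manifold, Theorem \ref{ch1}), and the classical {\L}ojasiewicz inequality of Lemma \ref{cgi} applied to the reduced finite-dimensional functional. The only cosmetic differences are that you verify the Fredholm-of-index-zero property explicitly (the paper leaves the invertibility of $(I-\mathcal{P})\mathcal{L}$ on the complementary subspace largely implicit), and you delegate the quantitative comparison of $|\mathcal{E}(w)-\mathcal{E}(Qw)|$, $\Vert w-Qw\Vert_V$ and $\Vert\delta\mathcal{E}(w)\Vert_{V^*}$ to the abstract machinery, which the paper instead carries out by hand in Lemmas \ref{lem3}--\ref{lem7}.
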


To prove this result we decompose $\mathcal{E}(w)$ as in 
\[ 
\mathcal{E}(w)=\mathcal{E}_1(w)-\mathcal{E}_2(w) 
\] 
for 
\[ 
\mathcal{E}_1(w)= \int_{\Om} \frac{1}{2} |\nabla w|^2+\frac{\la}{|\Om|}w \ dx, \quad 
\mathcal{E}_2(w)= \int_{\Om} e^w\,dx. 
\] 
The first functional $\mathcal{E}_1:V\to \R$ is analytic, and it holds that 
\begin{eqnarray*} 
&&\delta\mathcal{E}_1(w_\ast)[w]=\int_{\Om} \nabla w\cdot \nabla w_\ast+\frac{\la}{|\Om|} \ dx\\
&& \delta^2\mathcal{E}_1(w_\ast)[w,w]=\int_{\Om} |\nabla w|^2\,dx=(\nabla w,\nabla w) \\
&& \delta^k\mathcal{E}_1(w_\ast)[\overbrace{w,w, \cdots, w}^{k}]=0, \ k\geq 3, \quad w,w_\ast \in V, 
\end{eqnarray*} 
where $( \ , \ )$ denotes the $L^2$-inner product. The second functional $\mathcal{E}_2:V\to \R$ is also analytic by the Trudinger-Moser-Fontana inequality \eqref{ftm}, which assures
\[ \sum_{k=0}^{\infty} \frac{1}{k!}\int_{\Om}e^{w_\ast} \vert w\vert^k\,dx = \int_{\Om} e^{w^\ast+|w|}\,dx<+\infty, \quad w,w_\ast\in V. \] 
Then
\[ 
\mathcal{E}_2(w+w_\ast)-\mathcal{E}_2(w_\ast)=\sum_{k=1}^{\infty} \frac{1}{k!}\int_{\Om} e^{w_\ast}w^k\,dx
\] 
and hence 
\[ \delta^k\mathcal{E}_2(w_\ast)[\overbrace{w,w, \cdots, w}^{k}]=\int_\Omega e^{w_\ast}w^k \ dx, \quad k\geq 1. \]

Given $w^\ast\in V$  satisfying $\delta \mathcal{E}(w^\ast)=0$, the linearized operator 
\bgee
\mathcal{L}\equiv\delta^2\mathcal{E}(w^*)=-\Delta-e^{w^*}: V\to V^* 
\egee
is realized as a self-adjoint operator in $X=L^2(\Om)$ with domain $D(\mathcal{L})=H^2(\Om)$. To develop the theory of critical manifold, cf.  \cite{Ch06}, we first introduce
\bgee
X_1\equiv \mbox{Ker} \ \mathcal{L}=\{v\in D(\mathcal{L})\big|\mathcal{L}v=0\}\subset V=H^1(\Omega). 
\egee
Let $dimX_1=n<\infty$ and $\langle \phi_1,\dots, \phi_n\rangle$ be an orthonormal basis of $X_1$, and  define the orthogonal projection  $\mathcal{P}: X\to X_1$, which can be extended to $\mathcal{P}:V^\ast \rightarrow X_1$, by 
\[ 
\mathcal{P}v=\sum_{i=1}^n (v, \phi_i)\phi_i=\sum_{i=1}^n \langle \phi_i, v\rangle_{V,V^\ast} \phi_i.
\] 

We next recall  the following theorem, derived from the implicit function theorem applied to 
\[ (I-\mathcal{P})\delta\mathcal{E}(v)=0. \] 
The local manifold $S$ defined by (\ref{216}) below is analytic because $\mathcal{E}:V\rightarrow \R$ is so. 

\begin{thm}[\cite{Ch06}]
Each $w^*\in V$  with  $\delta\mathcal{E}(w^\ast)=0$ admits a neighbourhood $U\subset V$ of $w^*$ 
such that 
\begin{equation} 
\mathcal{S}=\{w\in U\big| (I-\mathcal{P})\delta\mathcal{E}(w)=0\} 
 \label{216}
\end{equation} 
is a local analytic manifold around $w^\ast$ with dimension equal to  $n.$
 \label{ch1} 
\end{thm}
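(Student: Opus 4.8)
The statement is a standard Lyapunov--Schmidt reduction, and the plan is to realize $\mathcal{S}$ as the graph of an analytic map over $X_1$ by means of the analytic implicit function theorem in Banach spaces. Introduce the analytic map
\[ G(w)=(I-\mathcal{P})\,\delta\mathcal{E}(w):V\longrightarrow (I-\mathcal{P})V^\ast, \]
where $(I-\mathcal{P})V^\ast$ denotes the closed subspace of $V^\ast$ annihilating $X_1$; analyticity of $G$ is inherited from that of $\mathcal{E}=\mathcal{E}_1-\mathcal{E}_2$ established above, the nontrivial part $\mathcal{E}_2$ being analytic by the Trudinger--Moser--Fontana inequality \eqref{ftm}. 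Since each $\phi_i\in X_1\subset V\cap X$, the functional $v\mapsto(v,\phi_i)$ is bounded on $V$, so $\mathcal{P}$ is a bounded projection on $V$ and we obtain the topological splitting $V=X_1\oplus X_2$ with $X_2=(I-\mathcal{P})V=\ker(\mathcal{P}|_V)$. Writing $w=w^\ast+\xi+\eta$ with $\xi\in X_1$ and $\eta\in X_2$, the goal is to solve $G(w^\ast+\xi+\eta)=0$ for $\eta$ as a function of $\xi$.

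The crux is the invertibility of the partial derivative in $\eta$ at the base point. A direct computation gives
\[ D_\eta G(w^\ast)[\eta]=(I-\mathcal{P})\,\delta^2\mathcal{E}(w^\ast)\eta=(I-\mathcal{P})\mathcal{L}\eta,\qquad \eta\in X_2. \]
I would first record that $\mathcal{L}=-\Delta-e^{w^\ast}:V\to V^\ast$ is Fredholm of index zero and self-adjoint in $X$: indeed $\mathcal{L}=(-\Delta+I)-(I+e^{w^\ast})$, where $-\Delta+I$ is an isomorphism $V\to V^\ast$ and multiplication by $I+e^{w^\ast}$ is compact from $V$ to $V^\ast$, using the compact embedding $H^1(\Omega)\hookrightarrow L^2(\Omega)$ together with $e^{w^\ast}\in L^p(\Omega)$ for all $p<\infty$ (again by \eqref{ftm}). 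Self-adjointness and the Fredholm alternative then identify the closed range as $\mathrm{Range}(\mathcal{L})=(\ker\mathcal{L})^{\perp}=(I-\mathcal{P})V^\ast$. Consequently $\mathcal{P}\mathcal{L}\eta=0$, so $(I-\mathcal{P})\mathcal{L}\eta=\mathcal{L}\eta$ on $X_2$; and $\mathcal{L}|_{X_2}:X_2\to(I-\mathcal{P})V^\ast$ is a bijection, being injective (as $X_2\cap\ker\mathcal{L}=X_2\cap X_1=\{0\}$) and surjective (as $\mathcal{L}(X_2)=\mathcal{L}(V)=\mathrm{Range}(\mathcal{L})$ because $\mathcal{L}(X_1)=0$). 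The bounded inverse theorem upgrades this to a Banach-space isomorphism, so $D_\eta G(w^\ast)$ is invertible.

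With this in hand, the analytic implicit function theorem furnishes neighbourhoods $B_1\subset X_1$ and $B_2\subset X_2$ of the origin and a real-analytic map $\Phi:B_1\to B_2$ with $\Phi(0)=0$ such that, for $(\xi,\eta)\in B_1\times B_2$,
\[ G(w^\ast+\xi+\eta)=0 \iff \eta=\Phi(\xi). \]
Shrinking to $U=w^\ast+(B_1\oplus B_2)$, this exhibits $\mathcal{S}$ as the graph $\{\,w^\ast+\xi+\Phi(\xi):\xi\in B_1\,\}$ of an analytic map over the $n$-dimensional space $X_1$, hence as a local analytic manifold through $w^\ast$ of dimension $n=\dim X_1$. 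One also reads off $D\Phi(0)=0$ from $D_\xi G(w^\ast)\xi=(I-\mathcal{P})\mathcal{L}\xi=0$ for $\xi\in X_1=\ker\mathcal{L}$, so that the tangent space $T_{w^\ast}\mathcal{S}$ is precisely $X_1$.

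The main obstacle is the isomorphism step of the second paragraph: everything hinges on establishing that $\mathcal{L}:V\to V^\ast$ is Fredholm with closed range equal to the $X_1$-annihilator, which is where the PDE input (compactness of the exponential multiplier via Trudinger--Moser--Fontana, and self-adjointness realizing $\mathrm{Range}(\mathcal{L})=(\ker\mathcal{L})^\perp$) genuinely enters; the remaining splitting and implicit-function arguments are then formal. A minor technical point to check is the compatibility of the $L^2$-orthogonal projection $\mathcal{P}$ with the duality pairing on $V^\ast$, which is guaranteed by $\phi_i\in V\cap X$ and the identity $\mathcal{P}v=\sum_i\langle\phi_i,v\rangle_{V,V^\ast}\phi_i$ already recorded above.
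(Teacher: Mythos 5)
Your proof is correct and follows essentially the same route as the paper, which likewise realizes $\mathcal{S}$ as the graph of an analytic map over $X_1=\mbox{Ker}\,\mathcal{L}$ by applying the analytic implicit function theorem to $(I-\mathcal{P})\delta\mathcal{E}(w)=0$ (the paper states this construction and cites \cite{Ch06} for the details). The only difference is that you spell out the crucial invertibility of $(I-\mathcal{P})\mathcal{L}$ on the complement $X_2$ --- via compactness of multiplication by $1+e^{w^\ast}$ from $V$ to $V^\ast$ and the Fredholm alternative identifying the range with the annihilator of $X_1$ --- a step the paper leaves to the cited reference and later asserts without proof in Lemma \ref{lem5}.
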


More precisely, we have the analytic mapping 
\bgee
g: U_1=U\cap X_1\to U_2=(I-\mathcal{P})U
\egee
such that $g(w_1^\ast)=w_2^\ast$ for $w^\ast=w_1^\ast +w_2^\ast \in U_1\oplus U_2$, and define
\[ \mathcal{S}=\{ w_1+ g(w_1) \mid w_1\in U_1\}. \] 
Then the following decomposition is valid
\begin{equation} 
w=w_1+w_2\in \mathcal{S}=U_1 \oplus U_2, \quad w_1=\mathcal{P}w, \ w_2=g(w_1), 
 \label{decomposition}
\end{equation} 
and then, the analytic mapping $Q:U\to \mathcal{S}$ is defined by 
\begin{equation} 
Qw=w_1+g(w_1)\in \mathcal{S}, \quad w=w_1+w_2\in  U_1 \oplus U_2.  
 \label{217}
\end{equation} 
We then  obtain 
\begin{equation} 
w-Qw=w_2-g(w_1)\in U_2 
 \label{218}
\end{equation} 
and also  
\begin{equation} 
Qw=w, \quad w\in \mathcal{S} 
 \label{219}
\end{equation} 
by (\ref{decomposition})-(\ref{217}). 

In the sequel we confirm several lemmas derived form the above structure.  

\begin{lem}\label{lem3}
It holds that 
\[ 
|\mathcal{E}(w)-\mathcal{E}(Qw)|\leq C||w-Qw||^2_V, \quad w\in U. 
\] 
\end{lem}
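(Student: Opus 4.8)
The plan is to expand $\mathcal{E}$ along the segment joining $Qw$ to $w$ and to exploit two structural facts already established above. First, since $Qw\in\mathcal{S}$, the defining relation (\ref{216}) gives $(I-\mathcal{P})\delta\mathcal{E}(Qw)=0$. Second, by (\ref{218}) the increment $h:=w-Qw=w_2-g(w_1)$ lies in $U_2=(I-\mathcal{P})U$, so that $\mathcal{P}h=0$, i.e. $(h,\phi_i)=0$ for $i=1,\dots,n$ and $h$ is $L^2$-orthogonal to $X_1=\mathrm{Ker}\,\mathcal{L}$. These two orthogonality-type properties are complementary and together kill the first-order term of the expansion.

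Concretely, I would write, by the fundamental theorem of calculus,
\[
\mathcal{E}(w)-\mathcal{E}(Qw)=\int_0^1 \delta\mathcal{E}(Qw+sh)[h]\,ds
=\delta\mathcal{E}(Qw)[h]+\int_0^1\!\!\int_0^s \delta^2\mathcal{E}(Qw+\tau h)[h,h]\,d\tau\,ds,
\]
and first show that the linear term vanishes. Indeed, $(I-\mathcal{P})\delta\mathcal{E}(Qw)=0$ forces $\delta\mathcal{E}(Qw)=\mathcal{P}\delta\mathcal{E}(Qw)\in X_1\subset X=L^2(\Om)$, so under the Gel'fand triple identification the duality pairing reduces to the $L^2$ inner product, and $\delta\mathcal{E}(Qw)[h]=(\delta\mathcal{E}(Qw),h)=0$ by the orthogonality $h\perp X_1$. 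Hence only the quadratic remainder survives.

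It then remains to bound the Hessian uniformly. From the computations of $\delta^2\mathcal{E}_1$ and $\delta^2\mathcal{E}_2$ recorded above,
\[
\delta^2\mathcal{E}(v)[h,h]=\|\nabla h\|_2^2-\int_\Om e^{v}h^2\,dx,
\]
and, shrinking the neighbourhood $U$ of Theorem \ref{ch1} if necessary so that it is bounded in $V$, the Trudinger-Moser-Fontana inequality (\ref{ftm}), in the form of the Remark following it, gives $\sup_{v\in U}\|e^v\|_2<\infty$. Combined with the Sobolev embedding $V=H^1(\Om)\hookrightarrow L^4(\Om)$ this yields $\left|\int_\Om e^v h^2\,dx\right|\le \|e^v\|_2\,\|h\|_4^2\le C\|h\|_V^2$, and therefore $|\delta^2\mathcal{E}(v)[h,h]|\le C\|h\|_V^2$ uniformly for $v$ on the segment $\{Qw+\tau h:\tau\in[0,1]\}\subset U$. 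Substituting into the integral representation gives $|\mathcal{E}(w)-\mathcal{E}(Qw)|\le \tfrac12\sup_{\tau\in[0,1]}|\delta^2\mathcal{E}(Qw+\tau h)[h,h]|\le C\|w-Qw\|_V^2$, as claimed.

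The hard part is the uniform control of the Hessian, namely ensuring $\|e^v\|_2$ stays bounded as $v$ ranges over the relevant segment; this is exactly where the subcritical Trudinger-Moser-Fontana bound is essential, and one must verify that $U$ can be chosen bounded in $V$ so the estimate is genuinely uniform in $w$. The vanishing of the linear term, by contrast, is a clean consequence of the defining property of $\mathcal{S}$ together with the fact that $w-Qw$ carries no component in $\mathrm{Ker}\,\mathcal{L}$.
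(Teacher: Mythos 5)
Your proof is correct and follows essentially the same route as the paper's: a second-order expansion of $\mathcal{E}$ at $Qw$, with the first-order term annihilated by combining $(I-\mathcal{P})\delta\mathcal{E}(Qw)=0$ (from $Qw\in\mathcal{S}$) with $w-Qw\in (I-\mathcal{P})U$. The only difference is one of rigor, not of method: you use the integral form of the Taylor remainder and make explicit the uniform bound on $\delta^2\mathcal{E}$ via the Trudinger--Moser--Fontana inequality and the embedding $H^1(\Omega)\hookrightarrow L^4(\Omega)$, details the paper leaves implicit in its $o(\Vert w-Qw\Vert_V^2)$ formulation.
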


\begin{proof}
First, we have  
\begin{eqnarray} 
& & \mathcal{E}(w)-\mathcal{E}(Qw) \nonumber\\ 
& & \quad =\left\langle w-Qw, \delta \mathcal{E}(Qw)\right\rangle+\frac{1}{2}\delta^2\mathcal{E}(Qw)[w-Qw, w-Qw] +o\left(||w-Qw||_{V}^2\right).
 \label{11}
\end{eqnarray} 
Second, there arises 
\[ (I-\mathcal{P})(w-Qw)=w-Qw \] 
by (\ref{218}), and therefore, $Qw\in \mathcal{S}$ implies   
\begin{eqnarray*}   
\langle w-Qw, \delta \mathcal{E}(Qw)\rangle & = &  
\langle (I-\mathcal{P})(w-Qw) ,  \delta \mathcal{E}(Qw)\rangle \\ 
& = & \langle w-Qw,  (I-\mathcal{P})\delta \mathcal{E}(Qw)\rangle=0.  
\end{eqnarray*} 
Then \eqref{11} entails the desired  estimate
\bgee
|\mathcal{E}(w)-\mathcal{E}(Qw)|\leq C||w-Qw||^2_V.
\egee
 \end{proof}

 \begin{lem}\label{lem4}
Any $\ve>0$ admits $\delta>0$ such that 
\[ w\in V, \  ||w-w^*||_V<\delta \quad \Rightarrow \quad ||w-Qw||_V<\ve. \] 
 \end{lem}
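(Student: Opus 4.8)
The plan is to read the claim directly off the structure of the nonlinear projection $Q$, using continuity of its two building blocks: the linear projection $\mathcal{P}$ and the analytic map $g$ furnished by Theorem \ref{ch1}. First I would recall that for $w\in U$ with decomposition $w = w_1 + w_2$, $w_1 = \mathcal{P}w$, $w_2 = (I-\mathcal{P})w$, formula (\ref{218}) gives $w - Qw = w_2 - g(w_1)$, and that at the critical point one has $w_2^* = g(w_1^*)$, so that $w^* - Qw^* = 0$ (consistent with (\ref{219})). Writing
\[
w - Qw = (w_2 - w_2^*) - \bigl(g(w_1) - g(w_1^*)\bigr)
\]
then reduces the whole statement to controlling the two differences on the right as $w \to w^*$.

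Second, I would estimate each piece in terms of $\|w - w^*\|_V$. For the first, $w_2 - w_2^* = (I-\mathcal{P})(w - w^*)$; since $X_1$ is finite-dimensional and spanned by the $H^2$-functions $\phi_i$, the projection $\mathcal{P}v = \sum_i (v,\phi_i)\phi_i$ is bounded as an operator on $V$ (each $L^2$ pairing $(v,\phi_i)$ is dominated by $\|v\|_V\,\|\phi_i\|_V$), whence $\|w_2 - w_2^*\|_V \le C\,\|w - w^*\|_V$. For the second, the same boundedness gives $w_1 = \mathcal{P}w \to w_1^*$ in $X_1$ as $w \to w^*$, and the continuity (indeed analyticity) of $g$ on $U_1$ guaranteed by Theorem \ref{ch1} yields $\|g(w_1) - g(w_1^*)\|_V \to 0$. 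Combining, $\|w - Qw\|_V \le C\,\|w - w^*\|_V + \|g(\mathcal{P}w) - g(w_1^*)\|_V \to 0$, so choosing $\delta$ small enough makes the right-hand side $< \ve$.

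The remaining points are bookkeeping: I must ensure $\delta$ is small enough that $\|w - w^*\|_V < \delta$ forces $w\in U$ and $w_1 = \mathcal{P}w \in U_1$, so that $g(w_1)$ and hence $Qw$ are actually defined; this is automatic once $\delta$ lies below the radius of $U$ and below the distance from $w_1^*$ to $\partial U_1$, again using boundedness of $\mathcal{P}$. I expect the only (mild) obstacle to be confirming that $\mathcal{P}$ is bounded on $V$ rather than merely on $X = L^2(\Om)$, which is exactly where the regularity $\phi_i \in H^2(\Om)$ of the kernel basis enters. No {\L}ojasiewicz-type input is needed here: the lemma is a pure continuity statement about the map $Q$ at its fixed point $w^*$.
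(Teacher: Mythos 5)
Your proof is correct and follows essentially the same route as the paper: the paper notes $Qw^*=w^*$ by (\ref{219}), writes $\|w-Qw\|_V\leq\|w-w^*\|_V+\|Qw^*-Qw\|_V$, and concludes by continuity (analyticity) of $Q$, which is exactly your argument after cancelling the $X_1$-components. The only difference is that you unpack the continuity of $Q$ explicitly into boundedness of $\mathcal{P}$ on $V$ and continuity of $g$ --- a more detailed justification of the step the paper dismisses as ``obvious because $Q:U\rightarrow\mathcal{S}$ is analytic,'' together with the same bookkeeping ($w\in U$, $\mathcal{P}w\in U_1$) that the paper compresses into ``we may assume $w\in U$.''
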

 \begin{proof}
 We may assume $w\in U$.   Since $w^\ast\in \mathcal{S}$ it holds that $Qw^*=w^*$ by (\ref{219}), which implies 
\[ 
 ||w-Qw||_V \leq ||w-w^*||_{V}+||Qw^*-Qw||_V. 
 \] 
The result is now obvious because $Q:U\rightarrow \mathcal{S}$ is analytic. 
 \end{proof}
 
 \begin{lem}\label{lem5}
There is $\ve_0>0$ such that 
\bgee
w\in V, \ ||w-w^*||_V<\ve_0 \quad \Rightarrow \quad ||w-Qw||_V \leq C||\delta\mathcal{E}(w)||_{V^*}.
\egee
 \end{lem}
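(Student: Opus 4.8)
The plan is to reduce the claim to an invertibility property of the linearized operator $\mathcal{L}=\delta^2\mathcal{E}(w^*)$ on the range of $I-\mathcal{P}$. First I note that $\mathcal{P}:V^*\to X_1$ is bounded (being of finite rank), so $\|(I-\mathcal{P})\delta\mathcal{E}(w)\|_{V^*}\le C\|\delta\mathcal{E}(w)\|_{V^*}$, and it suffices to bound $\|w-Qw\|_V$ by $\|(I-\mathcal{P})\delta\mathcal{E}(w)\|_{V^*}$. Writing $\zeta=w-Qw$, formula (\ref{218}) gives $\zeta\in(I-\mathcal{P})U$, so $\mathcal{P}\zeta=0$, i.e. $\zeta$ lies in the closed subspace $Y=\mathrm{Ker}\,(\mathcal{P}|_V)$. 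Moreover $Qw\in\mathcal{S}$ yields $(I-\mathcal{P})\delta\mathcal{E}(Qw)=0$ by (\ref{216}), which is what lets me express $(I-\mathcal{P})\delta\mathcal{E}(w)$ purely through the increment $\zeta$.

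The next step is a first-order expansion of the gradient along the segment joining $Qw$ to $w$. Since $\mathcal{E}:V\to\R$ is analytic, $w\mapsto\delta^2\mathcal{E}(w)\in\mathcal{B}(V,V^*)$ is continuous, and I would write
\[
\delta\mathcal{E}(w)-\delta\mathcal{E}(Qw)=\Big(\int_0^1\delta^2\mathcal{E}\big(Qw+s\zeta\big)\,ds\Big)\zeta=\mathcal{L}\zeta+R(w)\zeta,
\]
where $R(w)=\int_0^1[\delta^2\mathcal{E}(Qw+s\zeta)-\mathcal{L}]\,ds$. Using Lemma \ref{lem4} together with the continuity of $Q$, both $Qw$ and $Qw+s\zeta$ tend to $w^*$ as $w\to w^*$, uniformly in $s\in[0,1]$, so $\|R(w)\|_{\mathcal{B}(V,V^*)}\to 0$. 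Applying $I-\mathcal{P}$ and invoking $(I-\mathcal{P})\delta\mathcal{E}(Qw)=0$ gives
\[
(I-\mathcal{P})\delta\mathcal{E}(w)=(I-\mathcal{P})\mathcal{L}\zeta+(I-\mathcal{P})R(w)\zeta .
\]

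The crux is the coercivity estimate $\|(I-\mathcal{P})\mathcal{L}\zeta\|_{V^*}\ge c\|\zeta\|_V$ for $\zeta\in Y$; this is the step I expect to be the main obstacle. I would establish it as follows. The operator $\mathcal{L}=-\Delta-e^{w^*}:V\to V^*$ is a compact perturbation of the Riesz isomorphism $-\Delta+1$, the multiplication by $1+e^{w^*}$ being compact from $V$ to $V^*$ since $V\hookrightarrow L^q$ compactly for every finite $q$ in dimension two and $e^{w^*}\in\bigcap_p L^p$ by \eqref{ftm}. Hence $\mathcal{L}$ is Fredholm of index $0$ with closed range, $\mathrm{Ker}\,\mathcal{L}=X_1$, and by symmetry of the associated bilinear form $\mathrm{Range}\,\mathcal{L}$ equals the annihilator of $X_1$, i.e. $\mathrm{Range}\,\mathcal{L}=(I-\mathcal{P})V^*$. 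The restriction $\mathcal{L}|_Y:Y\to\mathrm{Range}\,\mathcal{L}$ is then a continuous bijection (injectivity because $Y\cap X_1=\{0\}$, surjectivity because $\mathcal{L}$ annihilates the $X_1$-component of any preimage), so the bounded inverse theorem yields $\|\zeta\|_V\le C\|\mathcal{L}\zeta\|_{V^*}$ for $\zeta\in Y$; and since $\mathcal{L}\zeta\in(I-\mathcal{P})V^*$ we have $(I-\mathcal{P})\mathcal{L}\zeta=\mathcal{L}\zeta$, giving the desired bound.

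Finally I would combine the pieces. Choosing $\ve_0>0$ small enough that the ball $\|w-w^*\|_V<\ve_0$ lies in $U$ (so $Qw$ is defined) and that $\|I-\mathcal{P}\|\,\|R(w)\|_{\mathcal{B}(V,V^*)}<c/2$ there, the expansion gives
\[
\|(I-\mathcal{P})\delta\mathcal{E}(w)\|_{V^*}\ge c\|\zeta\|_V-\tfrac{c}{2}\|\zeta\|_V=\tfrac{c}{2}\|w-Qw\|_V,
\]
whence $\|w-Qw\|_V\le (2/c)\|(I-\mathcal{P})\delta\mathcal{E}(w)\|_{V^*}\le C\|\delta\mathcal{E}(w)\|_{V^*}$, as claimed. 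The only delicate points are verifying the compactness of the perturbation, which underlies the Fredholm and closed-range structure, and the uniform smallness of $R(w)$, both of which rest on the analyticity of $\mathcal{E}$ and the Trudinger–Moser–Fontana bound \eqref{ftm}.
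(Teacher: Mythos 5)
Your proof is correct and follows essentially the same route as the paper: both arguments exploit $(I-\mathcal{P})\delta\mathcal{E}(Qw)=0$, linearize the gradient along the segment from $Qw$ to $w$, and invoke invertibility of the projected second variation on $V_2=(I-\mathcal{P})V$ (your $Y$), absorbing the remainder for $w$ close to $w^*$ via Lemma \ref{lem4}. The differences are matters of detail rather than of route --- you center the Taylor expansion at $w^*$ instead of at $Qw$, and you supply a Fredholm-theoretic justification of the isomorphism $(I-\mathcal{P})\mathcal{L}:V_2\to V_2^*$ that the paper asserts without proof --- both of which strengthen rather than alter the argument.
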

 
 \begin{proof}
 First, we have
 \begin{equation} 
 \delta\mathcal{E}(w)-\delta\mathcal{E}(Qw)=\delta^2\mathcal{E}(Qw)(w-Qw)+o\left(||w-Qw||_V\right).
  \label{q}
 \end{equation} 
 Since 
 \[ (I-\mathcal{P})\delta\mathcal{E}(Qw)=0 \] 
 by $Qw\in \mathcal{S}$, it follows that  
 \bge\label{13}
 (I-\mathcal{P})  \delta\mathcal{E}(w)= (I-\mathcal{P})\delta^2\mathcal{E}(Qw)(w-Qw)+o\left(||w-Qw||_V\right).
 \ege
 
Let 
$V_2=(I-\mathcal{P})(V)$ 
and recall $\mathcal{L}=\delta^2\mathcal{E}(w^*)$.  Then, 
\[ (I-\mathcal{P})\mathcal{L}:V_2\to V_2^* \] 
is an isomorphism. By Lemma \ref{lem4}, therefore, there is $\ve_1>0$ such that  
\[ (I-\mathcal{P})\delta^2\mathcal{E}({Qw}):V_2\to V_2^* \] 
is an isomorphism, provided that $||w-w^*||_V<\ve_1$ for $w\in V$.  

More precisely, we have $C_1>0$ such that  
\begin{equation} 
 ||z||_{V}\leq C_1 \left|\left|(I-\mathcal{P})\delta^2\mathcal{E}(Qw)(z)\right|\right|_{V^*}, \quad z\in V_2 
  \label{223}
\end{equation}  
for any $w\in V$ in $||w-w^*||_V<\ve_1$. Putting 
\[ z=w-Qw\in V_2=(I-\mathcal{P})V \] 
in (\ref{223}), we deduce
 \bgee
 ||w-Qw||_{V}&\leq& C_1 \left|\left|(I-\mathcal{P})\delta^2\mathcal{E}(Qw)(w-Qw)\right|\right|_{V^*}\\
 &=& C_1 \left|\left|(I-\mathcal{P})\delta\mathcal{E}(w)\right|\right|_{V^*}+o\left(||w-Qw||_V\right)
 \egee
 by (\ref{13}). Hence there is $\varepsilon_0>0$ such that 
 \[ w\in V, \ \Vert w-w^\ast\Vert_V<\varepsilon_0 \quad \Rightarrow \quad 
 ||w-Qw||_{V} \leq  C_2   \left|\left|(I-\mathcal{P})\delta\mathcal{E}(w)\right|\right|_{V^*}\leq C_3  \left|\left|\delta\mathcal{E}(w)\right|\right|_{V^*} 
\] 
by Lemma \ref{lem4}. 
 \end{proof}
 
 \begin{lem}\label{lem6}
 There is $\ve_0>0$ such that 
 \[ w\in V, \ ||w-w^*||_V<\ve_0 \quad \Rightarrow \quad ||\delta\mathcal{E}(Qw)||_{V^*}\leq C ||\delta\mathcal{E}(w)||_{V^*}. \] 
 \end{lem}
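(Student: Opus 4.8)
The plan is to exploit that $Qw$ lies on the critical manifold $\mathcal{S}$, so that $(I-\mathcal{P})\delta\mathcal{E}(Qw)=0$ and hence $\delta\mathcal{E}(Qw)=\mathcal{P}\delta\mathcal{E}(Qw)$. Consequently $||\delta\mathcal{E}(Qw)||_{V^*}=||\mathcal{P}\delta\mathcal{E}(Qw)||_{V^*}$, and it suffices to control the finite-dimensional component $\mathcal{P}\delta\mathcal{E}(Qw)$. I would relate this to $\mathcal{P}\delta\mathcal{E}(w)$ by the same first-order expansion (\ref{q}) already used in Lemma \ref{lem5}.

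First I would apply the bounded projection $\mathcal{P}$ to the identity $\delta\mathcal{E}(Qw)=\delta\mathcal{E}(w)-\delta^2\mathcal{E}(Qw)(w-Qw)+o(||w-Qw||_V)$ obtained from (\ref{q}), which yields $\mathcal{P}\delta\mathcal{E}(Qw)=\mathcal{P}\delta\mathcal{E}(w)-\mathcal{P}\delta^2\mathcal{E}(Qw)(w-Qw)+o(||w-Qw||_V)$, since $\mathcal{P}$ maps an $o(||w-Qw||_V)$ term into another one. The first term $\mathcal{P}\delta\mathcal{E}(w)$ is immediately bounded by $C||\delta\mathcal{E}(w)||_{V^*}$ because $\mathcal{P}:V^*\to X_1$ is bounded.

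The key step is the Hessian term, and here I would use the algebraic identity $\mathcal{P}\mathcal{L}=0$, where $\mathcal{L}=\delta^2\mathcal{E}(w^*)$. Indeed, since $\mathcal{L}$ is self-adjoint in $X=L^2(\Om)$ and $\mbox{Ker}\ \mathcal{L}=X_1$ is the range of $\mathcal{P}$, for every $z\in V$ and every basis vector $\phi_i$ one has $\langle \mathcal{L}z,\phi_i\rangle=\langle z,\mathcal{L}\phi_i\rangle=0$, whence $\mathcal{P}\mathcal{L}z=0$. Therefore $\mathcal{P}\delta^2\mathcal{E}(Qw)(w-Qw)=\mathcal{P}(\delta^2\mathcal{E}(Qw)-\mathcal{L})(w-Qw)$, which is bounded by $C||\delta^2\mathcal{E}(Qw)-\mathcal{L}||\,||w-Qw||_V$. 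Since $\mathcal{E}$ is analytic and $Q$ is continuous with $Qw^*=w^*$ (cf. Lemma \ref{lem4}), shrinking $\ve_0$ keeps $||\delta^2\mathcal{E}(Qw)-\mathcal{L}||$ bounded, so this whole contribution is $O(||w-Qw||_V)$.

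Collecting the three estimates gives $||\delta\mathcal{E}(Qw)||_{V^*}\leq C||\delta\mathcal{E}(w)||_{V^*}+C||w-Qw||_V$, and I would close the argument by invoking Lemma \ref{lem5}, which supplies $||w-Qw||_V\leq C||\delta\mathcal{E}(w)||_{V^*}$ on a possibly smaller ball, thereby absorbing the remaining term. The main obstacle is precisely recognizing and justifying $\mathcal{P}\mathcal{L}=0$: without it the leading Hessian term would be genuinely of order $||w-Qw||_V$ with an $O(1)$ constant and could not be reabsorbed, whereas self-adjointness forces its projection to vanish identically at $w^*$ and leaves only the small difference $\delta^2\mathcal{E}(Qw)-\mathcal{L}$ to be controlled by the analyticity of $\mathcal{E}$ together with Lemma \ref{lem5}.
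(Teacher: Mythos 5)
Your proof is correct, and its skeleton is the same as the paper's: use $Qw\in\mathcal{S}$ to write $\delta\mathcal{E}(Qw)=\mathcal{P}\delta\mathcal{E}(Qw)$, apply $\mathcal{P}$ to the expansion (\ref{q}), and close with Lemmas \ref{lem4} and \ref{lem5}. The one place you deviate is the Hessian term, and there your self-assessment is backwards. The identity $\mathcal{P}\mathcal{L}=0$ is indeed true (by symmetry of the form $\langle \phi,\mathcal{L}z\rangle$ on $V$ together with $\mathcal{L}\phi_i=0$), and your use of it is valid, but it is superfluous: the paper simply bounds $\Vert \mathcal{P}\delta^2\mathcal{E}(Qw)(w-Qw)\Vert_{V^*}\leq C_4\Vert w-Qw\Vert_V$ with no attempt to make the constant small, and then absorbs $C_5\Vert w-Qw\Vert_V$ via Lemma \ref{lem5}. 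Your concluding claim that an $O(1)$ constant in front of $\Vert w-Qw\Vert_V$ ``could not be reabsorbed'' is false: the target inequality $\Vert\delta\mathcal{E}(Qw)\Vert_{V^*}\leq C\Vert\delta\mathcal{E}(w)\Vert_{V^*}$ tolerates an arbitrary constant $C$, so Lemma \ref{lem5} turns any $O(1)\cdot\Vert w-Qw\Vert_V$ term into $O(1)\cdot\Vert\delta\mathcal{E}(w)\Vert_{V^*}$; smallness of coefficients would matter only if one had to absorb a multiple of $\Vert\delta\mathcal{E}(Qw)\Vert_{V^*}$ itself back into the left-hand side, which never happens here. Note moreover that your own estimate, $C\Vert\delta^2\mathcal{E}(Qw)-\mathcal{L}\Vert\,\Vert w-Qw\Vert_V$ with the operator norm merely \emph{bounded} after shrinking $\ve_0$, is itself exactly of the form you declare fatal, so the necessity claim is inconsistent with your own argument. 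In short: the proof goes through, but the step you single out as the main obstacle is decoration, not load-bearing.
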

 \begin{proof}
Equality (\ref{q}), combined with $Qw\in \mathcal{S}$, implies 
\begin{eqnarray*} 
\delta\mathcal{E}(Qw)&=& \mathcal{P} \delta \mathcal{E}(Qw)\\
&=& \mathcal{P}\delta \mathcal{E}(w)- \mathcal{P}\delta^2 \mathcal{E}(Qw)(w-Qw)+o\left(||w-Qw||_V\right). 
\end{eqnarray*} 
Hence it follows that 
\begin{equation} 
||\delta\mathcal{E}(Qw)||_{V^*}\leq ||\delta\mathcal{E}(w)||_{V^*}+C_4||w-Qw||_V+o\left(||w-Qw||_V\right). 
 \label{225}
\end{equation} 
Then Lemma \ref{lem4} assures there exists $\varepsilon_1>0$ such that 
\[ w\in V, \ ||w-w^*||_V<\ve_1 \quad \Rightarrow \quad ||\delta\mathcal{E}(Qw)||_{V^*} \leq ||\delta\mathcal{E}(w)||_{V^*}+C_5||w-Qw||_V. \] 
We finally obtain $\varepsilon_0>0$ such that 
\[  w\in V, \ ||w-w^*||_V<\ve_0 \quad \Rightarrow \quad ||\delta\mathcal{E}(Qw)||_{V^*}\leq C_5  ||\delta\mathcal{E}(w)||_{V^*} \] 
by Lemma \ref{lem5}.
 \end{proof}
 
 \begin{lem}\label{lem7}
 There exist $0<\theta\leq \frac{1}{2}$ and $\ve_0>0$ such that 
\begin{equation}\label{14}
 w\in \mathcal{S}, \ \Vert w-w^\ast\Vert_V<\varepsilon_0 \quad \Rightarrow \quad 
 |\mathcal{E}(w)-\mathcal{E}(w^*)|^{1-\theta}\leq C||\delta \mathcal{E}(w)||_{V^*}.
 \end{equation} 
  \end{lem}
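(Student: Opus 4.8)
The plan is to collapse the infinite-dimensional statement, which only concerns points $w\in\mathcal{S}$, onto the classical finite-dimensional {\L}ojasiewicz inequality of Lemma \ref{cgi}, using the analytic chart on the critical manifold $\mathcal{S}$. Recall from (\ref{decomposition}) that every $w\in\mathcal{S}$ near $w^\ast$ is written uniquely as $w=w_1+g(w_1)$ with $w_1=\mathcal{P}w\in U_1=U\cap X_1$, and that $X_1$ is $n$-dimensional with orthonormal basis $\phi_1,\dots,\phi_n$. Identifying $w_1=\sum_i a_i\phi_i$ with $a=(a_1,\dots,a_n)\in\R^n$ and translating so that $w_1^\ast$ corresponds to the origin, I would introduce the reduced energy
\[ E(a)=\mathcal{E}\bigl(w_1+g(w_1)\bigr)-\mathcal{E}(w^\ast). \]
Since $\mathcal{E}:V\to\R$ is analytic and $g$ is analytic (Theorem \ref{ch1}), $E$ is real-analytic on $U_1\subset\R^n$, with $E(0)=0$; moreover $\delta E(0)=0$ because $\delta\mathcal{E}(w^\ast)=0$. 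Lemma \ref{cgi} then supplies $0<\theta\le\frac12$ and a neighbourhood of the origin on which $|E(a)|^{1-\theta}\le C|\nabla E(a)|$.

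The crux is to identify $\nabla E(a)$ with $\delta\mathcal{E}(w)$. Differentiating $E$ along $\phi_i\in X_1$ and noting that the tangent vector to the chart $a\mapsto w$ is $\phi_i+Dg(w_1)[\phi_i]$, the chain rule gives
\[ \frac{\partial E}{\partial a_i}=\bigl\langle \delta\mathcal{E}(w),\,\phi_i+Dg(w_1)[\phi_i]\bigr\rangle_{V^\ast,V}. \]
The decisive simplification is that, for $w\in\mathcal{S}$, definition (\ref{216}) forces $(I-\mathcal{P})\delta\mathcal{E}(w)=0$, i.e. $\delta\mathcal{E}(w)=\mathcal{P}\delta\mathcal{E}(w)\in X_1$. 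Since $Dg(w_1)[\phi_i]\in U_2=(I-\mathcal{P})U$ and $\mathcal{P}$ is the $L^2$-orthogonal projection onto $X_1$, the pairing $\langle\delta\mathcal{E}(w),Dg(w_1)[\phi_i]\rangle$ vanishes, leaving $\partial E/\partial a_i=(\delta\mathcal{E}(w),\phi_i)$. These are exactly the coordinates of $\mathcal{P}\delta\mathcal{E}(w)$ in the orthonormal basis, so $|\nabla E(a)|=\|\mathcal{P}\delta\mathcal{E}(w)\|_2=\|\delta\mathcal{E}(w)\|_2$.

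It remains to pass from the $L^2$-norm to the $V^\ast$-norm, which is where finite-dimensionality enters once more: because $\delta\mathcal{E}(w)\in X_1$ and $X_1$ is finite-dimensional, the restrictions to $X_1$ of $\|\cdot\|_2$ and $\|\cdot\|_{V^\ast}$ are equivalent, hence $\|\delta\mathcal{E}(w)\|_2\le C\|\delta\mathcal{E}(w)\|_{V^\ast}$ (the general inequality runs the other way, but on $X_1$ both directions hold). Combining the three displays yields $|\mathcal{E}(w)-\mathcal{E}(w^\ast)|^{1-\theta}=|E(a)|^{1-\theta}\le C\|\delta\mathcal{E}(w)\|_{V^\ast}$ for $w\in\mathcal{S}$. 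Finally I would choose $\ve_0>0$ so small that $\|w-w^\ast\|_V<\ve_0$ pushes $a$ into the {\L}ojasiewicz neighbourhood, which is legitimate since $w_1=\mathcal{P}w$ depends continuously on $w$ and $\mathcal{P}w^\ast=w_1^\ast$. The main obstacle I anticipate is the gradient computation of the second step — in particular the rigorous justification of the orthogonal-projection identity that kills the $Dg$-term — together with the (counterintuitive) norm comparison, which is permissible only because it is carried out on the finite-dimensional kernel $X_1$.
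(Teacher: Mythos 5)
Your proposal is correct and follows essentially the same route as the paper, whose entire proof is the one-line observation that $\mathcal{S}$ is a finite-dimensional analytic manifold on which $\mathcal{E}$ is analytic, so Lemma \ref{cgi} applies. Your write-up simply makes explicit the details the paper leaves implicit — the chart $a\mapsto w_1+g(w_1)$, the vanishing of the $Dg$-term via $(I-\mathcal{P})\delta\mathcal{E}(w)=0$ so that $\nabla E$ is identified with $\mathcal{P}\delta\mathcal{E}(w)=\delta\mathcal{E}(w)\in X_1$, and the equivalence of $\Vert\cdot\Vert_2$ and $\Vert\cdot\Vert_{V^*}$ on the finite-dimensional kernel $X_1$ — all of which are sound.
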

  
 \begin{proof}
 Since $\mathcal{S}$ is a finite dimensional analytic manifold and $\mathcal{E}: S\to \R$ is analytic, the result follows from Lemma \ref{cgi}. 
 \end{proof}
 
We are ready to give the proof of Theorem \ref{thm3}. 

\begin{proof}[Proof of Theorem \ref{thm3}]

Given $w\in V$ in $||w-w^*||_V\ll 1$, we have   
\bgee
|\mathcal{E}(w)-\mathcal{E}(w^*)|&\leq& |\mathcal{E}(w)-\mathcal{E}(Qw)|+|\mathcal{E}(Qw)-\mathcal{E}(w^*)|\\
&\leq& C(||w-Qw||_V^2+||\delta\mathcal{E}(Qw)||_{V^*}^{\frac{1}{1-\theta}})
\egee
by Lemma \ref{lem3}, Lemma \ref{lem7}, and $Qw\in \mathcal{S}$, where $0<\theta\leq \frac{1}{2}$. Then Lemmas \ref{lem5} and \ref{lem6} imply 
\bgee
|\mathcal{E}(w)-\mathcal{E}(w^*)|\leq C(||\delta\mathcal{E}(w)||_{V^*}^{2}+||\delta\mathcal{E}(w)||_{V^*}^{\frac{1}{1-\theta}})
\egee
and hence the desired property \eqref{14}.
\end{proof}

\section{Proof of Theorem \ref{thm2}}\label{sec3}

Note that assuming $0<\lambda\leq 8\pi$ in (\ref{mmk1})-(\ref{mmk2}), we have readily obtained that $T=+\infty$ and the orbit $\mathcal{O}=\{ w(\cdot,t)\}$ is pre-compact in $C(\Omega)$ by Theorem \ref{thm1}. To apply Theorem \ref{thm3}, we use the parabolic regularity in the following form. 

\begin{lem}\label{lem8}
Given $w^*\in V$  with $\delta\mathcal{E}(w^*)=0$, we obtain 
\begin{equation} 
\sup_{t_0\leq t<t_0+T} ||w(\cdot, t)-w^*||_V\leq C(||w(\cdot, t_0)-w^*||_V+\sup_{t_0\leq t<t_0+T} ||w(\cdot, t)-w^*||_2),
 \label{c1}
\end{equation} 
for any $t_0\geq 0$ and $T>0$. 
\end{lem}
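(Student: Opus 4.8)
The plan is to avoid invoking genuine parabolic smoothing and instead to read off the required gradient bound directly from the Lyapunov functional $\mathcal{E}$. The point is that $||w-w^*||_V^2=||w-w^*||_2^2+||\nabla(w-w^*)||_2^2$, and the first summand already appears on the right-hand side of (\ref{c1}); hence it suffices to control $\sup_t||\nabla(w(\cdot,t)-w^*)||_2^2$ by the stated quantities.

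Writing $z=w-w^*$, the first step is to establish the exact identity
\[
\mathcal{E}(w)-\mathcal{E}(w^*)=\frac{1}{2}||\nabla z||_2^2-\int_\Omega G(z)\,dx,\qquad G(z)=e^{w^*}\bigl(e^{z}-1-z\bigr).
\]
This follows by expanding $\frac12|\nabla w|^2=\frac12|\nabla w^*|^2+\nabla w^*\cdot\nabla z+\frac12|\nabla z|^2$, integrating the cross term by parts (no boundary contribution since $\partial\Omega=\emptyset$), and substituting the Euler--Lagrange relation $\Delta w^*=\frac{\lambda}{|\Omega|}-e^{w^*}$ furnished by $\delta\mathcal{E}(w^*)=0$. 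All terms linear in $z$ then cancel, leaving precisely the displayed formula; note also that $G(z)\geq0$.

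Next I would combine three ingredients. (i) The variational structure (\ref{dmmk1})--(\ref{modelb}) gives $\frac{d}{dt}\mathcal{E}(w(\cdot,t))=-\int_\Omega e^{w}w_t^2\,dx\leq0$, so $\mathcal{E}(w(\cdot,t))\leq\mathcal{E}(w(\cdot,t_0))$ for every $t\geq t_0$. (ii) Theorem \ref{thm1} yields a uniform two-sided bound $0<c_0\leq u=e^{w}\leq c_1$, hence a uniform bound on $z$ and on $e^{w^*}$; the elementary estimate $|e^{z}-1-z|\leq C z^2$ then gives $0\leq G(z)\leq C z^2$, so that $\int_\Omega G(z(\cdot,t))\,dx\leq C||z(\cdot,t)||_2^2$. (iii) Using the identity at the times $t$ and $t_0$ together with $G\geq0$, for $t\in[t_0,t_0+T)$ one obtains
\[
\tfrac12||\nabla z(\cdot,t)||_2^2=\mathcal{E}(w(\cdot,t))-\mathcal{E}(w^*)+\int_\Omega G(z(\cdot,t))\,dx\leq\tfrac12||\nabla z(\cdot,t_0)||_2^2+C||z(\cdot,t)||_2^2.
\]
Taking the supremum over $t$, adding $\sup_t||z(\cdot,t)||_2^2$ to both sides to reconstitute the full $V$-norm on the left, and taking square roots then yields (\ref{c1}) with a constant depending only on $\Omega$, $\lambda$, and the bounds of Theorem \ref{thm1}.

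The main thing to get right is the first step: the cancellation of all $z$-linear terms through the steady-state equation, which is what turns $\mathcal{E}(w)-\mathcal{E}(w^*)$ into a clean sum of the Dirichlet energy of $z$ and a genuinely quadratic remainder $G(z)$. The role usually played by parabolic regularity is here taken over entirely by (a) the finiteness and differentiability of $\mathcal{E}$ along the orbit, guaranteed by the Trudinger--Moser--Fontana inequality (\ref{ftm}), and (b) the uniform bounds of Theorem \ref{thm1} that make $G$ controllable by $||z||_2^2$; no further smoothing estimate is needed.
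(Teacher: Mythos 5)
Your proof is correct, and it takes a genuinely different route from the paper's. The paper proves Lemma \ref{lem8} by parabolic regularity: writing $z=w-w^*$, it derives the linear equation $z_t=(e^{-w}\Delta-1)z+bz$ with $b$ uniformly bounded, and then estimates $z$ via Duhamel's formula and the evolution operator $\widetilde{U}(t,s)$ generated by $e^{-w}\Delta-1$, using the decay/smoothing bounds $\|\widetilde{U}(t,s)\|_{V\to V}\le Ce^{-(t-s)}$ and $\|\widetilde{U}(t,s)\|_{X\to V}\le C(t-s)^{-1/2}e^{-(t-s)}$. You instead exploit the gradient-flow structure: the exact identity $\mathcal{E}(w)-\mathcal{E}(w^*)=\tfrac12\|\nabla z\|_2^2-\int_\Omega e^{w^*}(e^z-1-z)\,dx$ (the linear terms do cancel, by testing the weak form of $\delta\mathcal{E}(w^*)=0$ with $z$), the monotonicity $\mathcal{E}(w(\cdot,t))\le\mathcal{E}(w(\cdot,t_0))$ for $t\ge t_0$, the pointwise sign $e^z-1-z\ge0$, and the uniform $L^\infty$ bounds to control $\int_\Omega G(z)\,dx\le C\|z\|_2^2$; the resulting constant is independent of $t_0$ and $T$, as required. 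One small correction: the boundedness of $e^{w^*}$ does not come from Theorem \ref{thm1} but from elliptic regularity of the steady state ($e^{w^*}\in L^p$ for every $p<\infty$ by \eqref{ftm}, hence $w^*\in W^{2,p}\subset L^\infty$); the paper uses the same fact implicitly when it asserts that $b$ and $\Delta w^*$ are bounded. As for what each approach buys: yours is markedly more elementary (no evolution operators, no smoothing theory) and fully suffices for \eqref{c1}; the paper's heavier machinery, however, also yields the estimates \eqref{131} and hence Remark \ref{rempr}, i.e. $\|w(\cdot,t+1)-w^*\|_V\le C\sup_{t\le s<t+1}\|w(\cdot,s)-w^*\|_2$ with no $\|w(\cdot,t_0)-w^*\|_V$ term on the right. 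That genuine smoothing statement is invoked later in the proof of Theorem \ref{thm2} to upgrade the $L^2$ decay rate to a $V$-norm (and ultimately $C^\infty$) rate, and your energy argument cannot reproduce it, since your right-hand side always retains the initial $V$-norm. So your proof is a valid, more elementary substitute for Lemma \ref{lem8} itself, but the parabolic estimates it bypasses are still needed elsewhere in the paper.
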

\begin{proof}
Since 
\[ w_t=e^{-w}\Delta w+1-\frac{\la}{|\Omega|}e^{-w}, \quad 0=e^{-w^\ast}\Delta w^\ast+1-\frac{\la}{|\Omega|}e^{-w^\ast} \] 
the function $z=w-w^*$ solves
\bgee
z_t&=&e^{-w} \Delta z+\left(e^{-w}-e^{-w^*}\right)\Delta w^*-\frac{\la}{|\Om|}\left(e^{-w}-e^{-w^*}\right)\\
&=& \left(e^{-w}\Delta-1\right)z+bz 
\egee
with $b=b(x,t)$ uniformly bounded. Here, $e^{-w}\Delta$ generates an evolution operator, denoted by $\{ U(t,s)\}$, satisfying 
\begin{equation} 
\Vert U(t,s)z_0\Vert_V\leq C \Vert z_0\Vert_V, \quad \Vert U(t,s)z_0||_{V}\leq C (t-s)^{-1/2}\Vert z_0\Vert_2, \quad 0\leq s<t<\infty. 
 \label{131} 
\end{equation} 
Hence $z(t)=U(t,z)z_0$ is the solution to 
\[ z_t=e^w\Delta z \quad \mbox{in $\Omega\times (s,+\infty)$}, \quad \left. z\right\vert_{t=s}=z_0. \] 
If $\widetilde{U}(t,s)$ denotes the evolution operator associated with $e^{-w}\Delta-1$, therefore, it holds that 
\bgee
||\widetilde{U}(t,s)||_{V\to V}\leq C e^{-(t-s)}, \quad ||\widetilde{U}(t,s)||_{X\to V}\leq C (t-s)^{-1/2} e^{-(t-s)}, \quad 0\leq s<t<\infty, 
\egee
and furthermore, 
\bgee
z(t)=\widetilde{U}(t,t_0)z(t_0)+\int_{t_0}^t \widetilde{U}(t,r) (bz)(r)\,dr, \quad t\geq t_0. 
\egee
Thus we obtain 
\bgee
||z(t)||_V\leq C\left( ||z(t_0)||_V+\int_{t_0}^t (t-r)^{-1/2} e^{-(t-r)}\,dr \sup_{t_0\leq r< t} ||z(r)||_2\right)
\egee
which finally entails
\bgee
\sup_{t_0\leq t<t_0+T} ||z(t)||_V&\leq& C\left(||z(t_0)||_V +\int_0^{\infty} s^{-1/2} e^{-s}\,ds \sup_{t_0\leq t<t_0+T} ||z(t)||_2\right)\\
&\leq&  C(||z(t_0)||_V + \sup_{t_0\leq t<t_0+T} ||z(t)||_2). 
\egee
\end{proof}

\begin{rem}
The second inequality of (\ref{131}) implies 
\[ \Vert w(\cdot,t+1)-w^\ast\Vert_V\leq C\sup_{t\leq s <t+1}\Vert w(\cdot,s)-w^\ast\Vert_2 \] 
for any $t\geq 0$ by 
\[ z(t+1)=\widetilde{U}(t+1,t)z(t)+\int_t^{t+1}\widetilde{U}(t+1,r)(bz)(r) \ dr. \] 
 \label{rempr}
\end{rem}

Now we are ready to give the proof of the main result in the current section. 

\begin{proof}[Proof of Theorem \ref{thm2}] 

We prescribe the constant $C$ in (\ref{c1}) as  $C=C_1\geq 1$ and thus:
\begin{equation} 
\sup_{t_0\leq t<t_0+T} ||w(\cdot, t)-w^*||_V\leq C_1(||w(\cdot, t_0)-w^*||_V+\sup_{t_0\leq t<t_0+T} ||w(\cdot, t)-w^*||_2).
 \label{c2}
\end{equation} 
Let the $\omega$-limit set of (\ref{mmk1})-(\ref{mmk2}) be 
\[ 
\omega(w_0)=\{w^*\in V \mid \mbox{there exists $t_k\rightarrow\infty$ such that 
$w(t_k)\to w^*$ in $C^{\infty}$ topology} \}. 
\]  
By Theorem \ref{thm11}, this $\omega(w_0)$ is non-empty, compact, connected, and satisfies  
\[ 
\omega(w_0) \subset \{w^*\in V\mid \delta\mathcal{E}(w^*)=0\}.
\] 
Hence we have $w^*\in V$ with $\delta\mathcal{E}(w^*)=0$ and $t_k\rightarrow \infty$ such that  
\begin{equation} 
w(\cdot,t_k)\rightarrow w^\ast \quad \mbox{in $C^\infty$ topology} 
 \label{accumulate}
\end{equation} 
and in particular, 
\begin{equation}\label{15a}
\Vert w(\cdot,t_k)-w^*\Vert_V\leq \frac{\ve_0}{4C_1}, \quad\mbox{for}\quad\mbox k\gg 1,  
\end{equation} 
where $\varepsilon_0>0$ and $C_1\geq 1$ are constants prescribed in Theorem \ref{ch1} and (\ref{c2}), respectively.

We have
\bgee
\frac{d}{dt} \mathcal{E}(w)=-\langle w_t,\delta\mathcal{E}(w)\rangle_{V,V^*}=-(w_t, e^w w_t) \leq 0,
\egee
by (\ref{mmk1}), and hence the existence of 
\bge\label{16}
\lim_{t\to \infty}\mathcal{E}(w(\cdot, t))=\mathcal{E}_{\infty}=\mathcal{E}(w^*), 
\ege
where the second equality follows from $w^*\in \omega(w_0)$. In particular, 
\[ \mathcal{H}(t)=(\mathcal{E}(w(\cdot, t))-\mathcal{E}(w^*))^{\theta}\geq 0 \] 
is well-defined, and it holds that 
\begin{equation}
\lim_{t\to \infty}\mathcal{H}(t)=0.
 \label{16a}
\end{equation} 

Since 
\[ C_2^{-1}\leq e^w\leq C_2 \quad \mbox{in $\Om\times (0,\infty)$} \] 
is valid with $C_2\geq 1$, we obtain 
\begin{eqnarray*} 
-\frac{d \mathcal{H}}{dt}&=&-\theta \left(\mathcal{E}(w)-\mathcal{E}(w^*)\right)^{\theta-1} \left<w_t, \delta\mathcal{E}(w)\right>_{V,V^*}\\
&=& \theta \left(\mathcal{E}(w)-\mathcal{E}(w^*)\right)^{\theta-1} (e^w, w_t^2) \\
&\geq& \theta C_2^{-1}  \left(\mathcal{E}(w)-\mathcal{E}(w^*)\right)^{\theta-1}  ||w_t||_2^2\\
&\geq& \theta C_2^{-3/2}  \left(\mathcal{E}(w)-\mathcal{E}(w^*)\right)^{\theta-1}||w_t||_2 \left(\int_{\Om} e^w w_t^2\,dx\right)^{1/2}\\
&=&  \theta C_2^{-3/2}  \left(\mathcal{E}(w)-\mathcal{E}(w^*)\right)^{\theta-1}||w_t||_2 ||\delta\mathcal{E}(w)||_2  
\end{eqnarray*} 
again by (\ref{mmk1}). Therefore, there is $C_3>0$ such that
\bge\label{17}
-\frac{d \mathcal{H}}{dt}\geq \frac{1}{C_3} \left(\mathcal{E}(w)-\mathcal{E}(w^*)\right)^{\theta-1}||w_t||_2 ||\delta\mathcal{E}(w)||_{V^*}.
\ege

To apply Theorem \ref{thm3}, assume the existence of $t_0>t_k$ such that 
\begin{equation} 
\Vert w(\cdot,t)-w^\ast\Vert_V <\varepsilon_0, \quad t_k\leq t\leq t_0. 
 \label{135}
\end{equation} 
Then inequality (\ref{17}) implies  
\begin{equation} 
\Vert w_t\Vert_2\leq -C_4\frac{d\mathcal{H}}{dt}, \quad t_k\leq t\leq t_0,  
 \label{236}
\end{equation} 
where $C_4>0$ is a constant. It follows that  
\[ 
\Vert w(\cdot, t)-w(\cdot, t_k)\Vert_2\leq C_4 \mathcal{H}(t_k), \quad t_k\leq t\leq t_0,
\] 
and thus we obtain 
\begin{equation} 
\Vert w(\cdot, t)-w(\cdot, t_k)||_V\leq \frac{\ve_0}{4}+C_1 C_4 \mathcal{H}(t_k)  \quad t_k\leq t\leq t_0 
 \label{136}
\end{equation} 
by (\ref{c2}) and (\ref{15a}) with $C_1\geq 1$. 
 
Equality \eqref{16a} assures $k\gg1$ satisfying 
\begin{equation}  
\mathcal{H}(t_k)<\frac{\ve_0}{4C_1C_4}. 
 \label{137}
\end{equation} 
Fix such $k$. By the above argument, if there is $t_0>t_k$ provided with (\ref{135}), it holds that (\ref{136}) and hence 
\begin{equation}  
\Vert w(\cdot,t)-w(\cdot,t_k)\Vert_V <\varepsilon_0/2, \quad t_k\leq t\leq t_0 
 \label{1381}
\end{equation} 
by (\ref{137}).  Since we have readily assumed (\ref{15a}) with $C_1\geq 1$, inequality (\ref{1381}) implies 
\begin{equation}  
\Vert w(\cdot,t)-w^\ast\Vert_V <3\varepsilon_0/4, \quad t_k\leq t\leq t_0.  
 \label{139}
\end{equation} 
We have thus observed that (\ref{135}) implies (\ref{139}). Regarding (\ref{15a}) with $C_1\geq 1$ again, we conclude  
\begin{equation} 
\Vert w(\cdot,t)-w^\ast\Vert_V <\varepsilon_0, \quad t\geq t_k. 
 \label{138}
\end{equation} 
Consequently,  by (\ref{138})  inequality (\ref{236}) is improved as 
\bge\label{17b}
||w_t||_2\leq -C_4\frac{d \mathcal{H}}{dt}, \quad t\geq t_k,
\ege
which implies 
\[ 
\int_0^{\infty} ||w_t||_2 dt<\infty.
\] 
Then we obtain  
\[ \lim_{t\to \infty} \Vert w(\cdot, t)-w^*\Vert_2=0  \] 
by (\ref{accumulate}), and hence $\omega(w_0)=\{w^*\}$ from the uniqueness of the limit. It thus follow that 
\begin{equation} \label{18}
w(\cdot, t)\to w^*, \ t\rightarrow \infty \quad\mbox{in $C^{\infty}$ topology}.
\end{equation} 

Turning to the rate of convergence, we use   
\[ \vert \mathcal{E}(w(\cdot,t))-\mathcal{E}(w^\ast)\vert^{1-\theta}\leq C_5\Vert \delta \mathcal{E}(w(\cdot,t))\Vert_{V^\ast}, \quad t\geq t_k \] 
derived from Theorem \ref{thm3} and (\ref{138}).  Using  (\ref{mmk1}) again we derive 
\begin{eqnarray*} 
-\frac{d \mathcal{H}}{dt} & = & \theta(\mathcal{E}(w)-\mathcal{E}(w^\ast))^{\theta-1}\langle w_t, -\delta \mathcal{E}(w) \rangle \\ 
& = & \theta (\mathcal{E}(w)-\mathcal{E}(w^\ast))^{\theta-1}\Vert \delta \mathcal{E}(w)\Vert_2^2 \\ 
&\geq& \frac{1}{C_6} \left(\mathcal{E}(w)-\mathcal{E}(w^*)\right)^{\theta-1}||\delta\mathcal{E}(w)||_{V^*}^2 \\
&\geq& \frac{1}{C_6C_5^2} \left(\mathcal{E}(w(t))-\mathcal{E}(w^*)\right)^{1-\theta}\\
&=&   \gamma \mathcal{H}^{\frac{1}{\theta}-1},  \ \gamma=\frac{1}{C_6C_5^2}, \quad \quad t\geq t_k. 
\end{eqnarray*} 
We thus obtain  
\[ 
\mathcal{H}(t)\leq C \Phi(t), \quad t\geq t_k,
\] 
where
\[ 
\Phi(t)=\left\{ \begin{array}{ll}
t^{-\frac{\theta}{1-2\theta}}, & 0<\theta<\frac{1}{2}\\
e^{-\gamma t}, & \theta=\frac{1}{2}. \end{array} \right. 
\] 

Inequality \eqref{17b} now implies 
\bgee
||w(\cdot, t)-w(\cdot, s)||_2\leq C \Phi(s), \quad t\geq s\geq t_k, 
\egee
and sending $t\to \infty$, we get  
\[ 
\Vert w^*-w(\cdot,s) \Vert|_2 \leq C\Phi(t), \quad s\geq t_k, 
\] 
or
\begin{equation} 
\Vert w(\cdot,t)-w^*\Vert_2 \leq C\Phi(t), \quad t\geq t_k, 
 \label{1315}
\end{equation} 
Then, Remark \ref{rempr} entails
\bgee
||w(\cdot, t)-w^*||_V \leq C\Phi(t), \quad t\to \infty. 
\egee

Given multi-index $\alpha$, we can derive an equation of $z_\alpha=D^\alpha(w-w^\ast)$, where the second estimate of (\ref{131}) is applicable. Then an iteration ensures the rate of convergence $\Phi(t)$ in (\ref{18}).  
\end{proof}

\section{Non-degenerate Steady-States}\label{sec4}

Recall that $u^\ast=u^\ast(x)>0$ is called a steady-state to (\ref{ieq1})-(\ref{consv}) when it solves (\ref{stationary}). Then $v^\ast=v^\ast(x)$ defined by (\ref{transform}) satisfies (\ref{113}), which is the Euler-Lagrange equation for the functional $J_\lambda=J_\lambda(v)$ of $v\in V_0$, defined by (\ref{functionalj})-(\ref{functionspace}). We say that $u^\ast$ is non-degenerate if this $v^\ast\in V_0$ is a non-degenerate critical point of $J_\lambda$ on $V_0$. Here, $u^\ast$ is reproduced by $v^\ast$ through (\ref{0114}). 

More precisely, first, we notice  
\[ 
\delta J_{\la}(v)[\phi] = \left. \frac{d}{ds}J_\lambda(v+s\phi)\right\vert_{s=0} = (\nabla\phi, \nabla\phi)-\frac{\lambda\int_\Omega e^v\phi \ dx}{\int_\Omega e^v \ dx}, \quad \phi \in V_0, \] 
to identify   
\[ 
\delta J_{\la}(v)=-\Delta v-\lambda\left(\frac{e^{v}}{\int_{\Om} e^{v}\,dx}-\frac{1}{\vert \Omega\vert}\right)\in V_0^\ast, \quad v\in V_0.  \] 
Hence the above $v^*$, realized as a solution to (\ref{113}), belongs to $V_0$ and is a critical point of $J_{\la}$ on $V_0$. 

Second, the quadratic form $\mathcal{Q}:V_0\times V_0\rightarrow \R$ defined by 
\begin{eqnarray*} 
\mathcal{Q}(\phi, \phi) & = & \left.\frac{d^2}{ds^2}J_\lambda(v^\ast+s\phi)\right\vert_{s=0} \nonumber\\ 
& = & (\nabla\phi, \nabla\phi) -\lambda\frac{\int_{\Om}e^{v^*}\phi^2 \ dx}{\int_\Omega e^{v^\ast}} + \lambda\left( \frac{\int_\Omega e^{v^\ast}\phi \ dx}{\int_\Omega e^{v^\ast} \ dx}\right)^2   
\end{eqnarray*} 
is associated with the linearized operator $\delta^2 J_{\la}(v^*):V_0\rightarrow V_0^\ast$ through 
\[ \mathcal{Q}(\phi,\phi)=\langle \phi, \delta^2J_\lambda(v^\ast)\phi \rangle_{V,V^*}. \] 
This $\delta^2J_\lambda(v^\ast)$ is realized as a self-adjoint operator in $X_0=L^2(\Omega)\cap V_0$, denoted by $\mathcal{B}$, with the domain $D(\mathcal{B})=H^2(\Om)\cap V_0$, satisfying 
\[ (\mathcal{B}\phi, \psi)=\mathcal{Q}(\phi,\psi), \quad \phi \in D(\mathcal{B})\subset V_0, \ \psi\in V_0.  \]  
Hence it holds that 
\begin{eqnarray} 
\mathcal{B}\phi & =&-\Delta \phi-\frac{\la e^{v^*}}{\int_{\Om} e^{v^*}\,dx}\phi+\frac{\la \int_{\Om}e^{v^*}\phi \,dx}{\left(\int_{\Om} e^{v^*}\,dx\right)^2} \ e^{v^*} \nonumber\\
&=&-\Delta \phi-u^*\phi+\frac{1}{\la} (\phi, u^*)u^\ast,  \quad \phi\in D(\mathcal{B})=H^2(\Omega)\cap V_0  
 \label{42}
\end{eqnarray} 
by (\ref{0114}). 

Now we show the following lemma stated in Section \ref{sec1}.

\begin{lem}\label{lem0}
The stationary solution $u^*=u^*(x)>0$ to \eqref{ieq1}-\eqref{consv} is non-degenerate if and only if the  property (\ref{ikk39}) holds.  
\end{lem}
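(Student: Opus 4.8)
The plan is to translate the non-degeneracy condition—which by definition says that the self-adjoint realization $\mathcal{B}$ from (\ref{42}), with domain $D(\mathcal{B})=H^2(\Omega)\cap V_0$, has trivial kernel—into the eigenvalue statement (\ref{ikk39}) by means of an explicit linear isomorphism that removes the nonlocal term $\frac{1}{\lambda}(\phi,u^*)u^*$. First I would introduce the shift operator $L\phi=\phi-\frac{1}{\lambda}(\phi,u^*)$ on $H^2(\Omega)\cap V_0$ and record the key algebraic identity
\[
\mathcal{B}\phi=-\Delta(L\phi)-u^*(L\phi), \qquad \phi\in H^2(\Omega)\cap V_0,
\]
which follows directly from (\ref{42}), since subtracting the constant $\frac{1}{\lambda}(\phi,u^*)$ contributes exactly $\frac{1}{\lambda}(\phi,u^*)u^*$ after multiplication by $u^*$ and is annihilated by $\Delta$. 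Simultaneously, integrating $L\phi$ against $u^*$ and using $\int_\Omega u^*\,dx=\lambda$ shows that $(L\phi,u^*)=0$ holds automatically.

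Next I would verify that $L$ is a linear isomorphism from $H^2(\Omega)\cap V_0$ onto the constrained space $W=\{\psi\in H^2(\Omega)\mid (\psi,u^*)=0\}$, with inverse $\psi\mapsto \psi-\frac{1}{|\Omega|}\int_\Omega \psi\,dx$. Injectivity holds because $L\phi=0$ forces $\phi$ to be constant, hence $\phi=0$ by the zero-mean condition defining $V_0$; surjectivity is a one-line computation. Under this isomorphism the identity above shows that $\phi\in\ker\mathcal{B}$ if and only if $\psi=L\phi\in W$ solves $-\Delta\psi=u^*\psi$. Therefore $\ker\mathcal{B}=\{0\}$ if and only if the only $\psi\in W$ with $-\Delta\psi=u^*\psi$ is $\psi=0$, which is precisely the statement (\ref{ikk39}).

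Finally I would spell out the two implications concretely. In the forward direction, a kernel element $\phi$ maps to $\psi=L\phi$ satisfying the hypotheses of (\ref{ikk39}), so (\ref{ikk39}) gives $\psi=0$ and hence $\phi=0$. In the converse direction, a solution $\psi$ of (\ref{ikk39}) maps back to $\phi=L^{-1}\psi\in\ker\mathcal{B}$, and non-degeneracy yields $\phi=0$, i.e. $\psi$ is constant; one then uses the extra observation that a constant solving $-\Delta\psi=u^*\psi$ with $u^*>0$ must vanish, since $u^*\psi\equiv 0$ forces $\psi\equiv 0$.

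I expect no deep obstacle here: the only care required is the bookkeeping of the two scalar corrections—the mean value $\frac{1}{|\Omega|}\int_\Omega \psi\,dx$ and the weighted average $\frac{1}{\lambda}(\phi,u^*)$—and checking that the orthogonality constraint $(\psi,u^*)=0$ in (\ref{ikk39}) corresponds exactly to the zero-mean condition of $V_0$ under $L$. Once the intertwining identity and the isomorphism property of $L$ are established, both implications follow immediately.
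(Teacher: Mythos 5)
Your proposal is correct and follows essentially the same route as the paper's own proof: the paper uses exactly the same two scalar shifts, $\phi \mapsto \psi = \phi - \frac{1}{\lambda}(\phi,u^*)$ in one direction and $\psi \mapsto \phi = \psi - \frac{1}{|\Omega|}\int_\Omega \psi\,dx$ in the other, verifies the same intertwining computation $\mathcal{B}\phi = -\Delta\psi - u^*\psi$ together with $(\psi,u^*)=0$, and closes with the same observation that a constant solution of $-\Delta\psi = u^*\psi$ with $u^*>0$ must vanish. Packaging these shifts as an explicit isomorphism $L$ with inverse is only a more structural presentation of the identical computations, not a different argument.
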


\begin{proof}
By the definition, the non-degeneracy of $u^\ast$ means the non-dgeneracy of  $\mathcal{B}$ in $X_0=L^2(\Omega)\cap V_0$, which is equivalent to  
\begin{equation} \label{mk46}
\phi\in D(\mathcal{B}), \ \mathcal{B}\phi=0 \quad \Rightarrow \quad \phi=0. 
\end{equation} 

Assume, first, $\phi\in D(\mathcal{B})\setminus\{0\}$ with $\mathcal{B}\phi=0$, and let  
\[ \psi=\phi-\frac{1}{\la}\int_{\Om} u^*\phi\,dx \in H^2(\Omega). \] 
Then we have  
\[  -\Delta \psi=u^\ast\psi, \quad \int_\Omega \psi u^\ast \ dx=0. \] 
It also holds  that $\psi\neq 0$ by $\phi\in V_0\setminus \{0\}$. Hence if $u^\ast$ is degenerate there is $\psi\in V\setminus\{0\}$ satisfying (\ref{ikk39}). 

If problem \eqref{ikk39} admits $\psi\in H^2(\Om)\setminus\{0\}$, second, we take 
\[ \phi=\psi-\frac{1}{\vert \Omega\vert}\int_{\Om} \psi\,dx\in H^2(\Omega)\cap V_0=D(\mathcal{B}). \] 
It holds that 
\[ (\phi, u^*)=-\frac{\la}{|\Om|} \int_{\Om} \psi\,dx, \] 
and hence 
\begin{eqnarray*} 
\mathcal{B}\phi & = & -\Delta \phi-u^\ast\phi+\frac{1}{\lambda}(\phi, u^\ast)u^\ast \\ 
& = & -\Delta \psi-u^\ast\psi+u^\ast\frac{1}{\vert\Omega\vert}\int_\Omega \psi+\frac{1}{\lambda}(\phi, u^\ast)u^\ast \\ 
& = & -\Delta \psi-u^\ast\psi=0 
\end{eqnarray*} 
by (\ref{ikk39}). If $\phi=0$, then it holds that $\psi=\mbox{constant}$ and by virtue of  (\ref{ikk39}), there arises $\psi=0$, a contradiction. Thus $\mathcal{B}$ has the eigenvalue $0$, and hence this operator is degenerate. 
\end{proof}

\begin{lem}\label{lem0a}
Let   $u^*=u^*(x)>0$ be a steady-state to \eqref{ieq1}-\eqref{consv}, and define $w^\ast\in V=H^1(\Omega)$ by (\ref{transform}), i.e., 
\begin{equation} 
w^\ast=\log u^\ast. 
 \label{441}
\end{equation} 
Set
\begin{equation} 
\mathcal{M}=-\Delta -u^*:V\to V^*. 
 \label{operator}
\end{equation} 
Then the following statements are equivalent. 
\begin{enumerate}
\item
There exists $C>0$ such that 
\begin{equation} 
\phi\in V, \ \int_{\Om} u^*\phi\,dx=0 \quad \Rightarrow \quad 
||\phi||_V\leq C||\mathcal{M}\phi||_{V^*}. 
 \label{psk1}
\end{equation} 
\item
There exists $\ve_0>0$ and $C>0$ such that 
\begin{equation} 
w\in V, \ \int_{\Om} e^w\,dx=\la, \ ||w-w^*||_V<\ve_0 \quad 
\Rightarrow \quad 
||w-w^*||_{V}\leq C|| \mathcal{M}(w-w^*)||_{V^*}. 
 \label{psk2}
\end{equation} 
\end{enumerate}
\end{lem}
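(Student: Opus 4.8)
The plan is to prove the two implications \eqref{psk1}$\Rightarrow$\eqref{psk2} and \eqref{psk2}$\Rightarrow$\eqref{psk1} separately. Both rest on a single observation: the mass constraint $\int_\Om e^w\,dx=\la=\int_\Om e^{w^\ast}\,dx$ forces $\phi=w-w^\ast$ to be orthogonal to $u^\ast$ only up to a \emph{quadratic} error, and this error is controlled because the problem is subcritical. The linear bookkeeping is governed by the identity $\mathcal{M}(1)=-\Delta(1)-u^\ast=-u^\ast$, which lets me pass between a constant shift and a multiple of $u^\ast$ on the right-hand side.

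For \eqref{psk1}$\Rightarrow$\eqref{psk2}, I would take $w$ with $\int_\Om e^w\,dx=\la$ and $\|w-w^\ast\|_V<\ve_0$, and set $\phi=w-w^\ast$. Writing $e^w-e^{w^\ast}=u^\ast\phi+e^{w^\ast}(e^\phi-1-\phi)$ and integrating, the constraint yields $\int_\Om u^\ast\phi\,dx=-\int_\Om e^{w^\ast}(e^\phi-1-\phi)\,dx$, whose right-hand side I would bound by $C\|\phi\|_V^2$ using $|e^\phi-1-\phi|\le\tfrac12\phi^2e^{|\phi|}$, the $L^\infty$ bound on $u^\ast=e^{w^\ast}$, the two-dimensional Sobolev embeddings $H^1(\Om)\hookrightarrow L^p(\Om)$, and the inequality \eqref{ftm} (applied, as in the Remark following it, to multiples of $\phi$) to keep $\|e^{|\phi|}\|_r$ finite. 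Hence $c:=\tfrac1\la\int_\Om u^\ast\phi\,dx$ satisfies $|c|\le C\|\phi\|_V^2$. Splitting $\phi=\phi_0+c$ with $\phi_0=\phi-c$ so that $\int_\Om u^\ast\phi_0\,dx=0$, applying \eqref{psk1} to $\phi_0$, and using $\mathcal{M}\phi_0=\mathcal{M}\phi+c\,u^\ast$, I obtain $\|\phi\|_V\le C\|\mathcal{M}\phi\|_{V^\ast}+C'\|\phi\|_V^2$; shrinking $\ve_0$ so that $C'\ve_0\le\tfrac12$ absorbs the quadratic term and gives \eqref{psk2}.

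For \eqref{psk2}$\Rightarrow$\eqref{psk1}, I would instead build a curve on the constraint manifold out of a given $\phi\in V$ with $\int_\Om u^\ast\phi\,dx=0$. For small $s$ set $w_s=w^\ast+s\phi+a(s)$, where the constant $a(s)$ is fixed by $\int_\Om e^{w_s}\,dx=\la$, i.e.\ $e^{a(s)}=\la\big/\!\int_\Om u^\ast e^{s\phi}\,dx$. Expanding and using $\int_\Om u^\ast\phi\,dx=0$ together with the same remainder estimate gives $\int_\Om u^\ast e^{s\phi}\,dx=\la+O(s^2)$, hence $a(s)=O(s^2)$. For $s$ small one has $\|w_s-w^\ast\|_V<\ve_0$, so \eqref{psk2} applies; since $w_s-w^\ast=s\phi+a(s)$ and $\mathcal{M}(w_s-w^\ast)=s\,\mathcal{M}\phi-a(s)\,u^\ast$, dividing the resulting inequality by $s$ and letting $s\to0$ (so that $a(s)/s\to0$) leaves $\|\phi\|_V\le C\|\mathcal{M}\phi\|_{V^\ast}$, which is \eqref{psk1}.

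The linear steps and the identity $\mathcal{M}(1)=-u^\ast$ are immediate, so the step I expect to demand the most care is the uniform control of the quadratic remainders $\int_\Om e^{w^\ast}(e^{t\phi}-1-t\phi)\,dx\le C\|\phi\|_V^2$, holding uniformly for $|t|\le1$ and $\|\phi\|_V<\ve_0$. This is exactly where the subcritical structure enters: the Trudinger-Moser-Fontana inequality \eqref{ftm}, combined with the planar Sobolev embeddings, is what keeps the exponential factor $e^{|\phi|}$ in every $L^r$ and makes these remainders genuinely second order in $\|\phi\|_V$, closing both absorption and limiting arguments.
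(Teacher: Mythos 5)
Your proposal is correct, and one of its two halves takes a genuinely different (and simpler) route than the paper. For \eqref{psk1}$\Rightarrow$\eqref{psk2} you follow essentially the paper's argument: split $z=w-w^\ast$ into a piece orthogonal to $u^\ast$ plus a one-dimensional correction, bound the correction quadratically by combining the mass constraint with the Trudinger--Moser--Fontana inequality \eqref{ftm}, and absorb the quadratic term for $\ve_0$ small; the only real difference is that you subtract the constant $c=\la^{-1}\int_\Om u^\ast\phi\,dx$, so that $\mathcal{M}$ applied to the correction is the explicit element $-c\,u^\ast$ via the identity $\mathcal{M}1=-u^\ast$, whereas the paper subtracts the $L^2$-projection $(\phi^\ast,z)\phi^\ast$ with $\phi^\ast=u^\ast/\Vert u^\ast\Vert_2$, for which $\mathcal{M}\phi^\ast$ is merely some fixed element of $V^\ast$; both bookkeepings close identically. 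For \eqref{psk2}$\Rightarrow$\eqref{psk1} your route diverges from the paper's: there, a curve $w(s)=w^\ast+s\phi+s^2z(s)$ on the constraint manifold is produced by introducing $\Phi(s,z)=s^{-1}\int_\Om\bigl(e^{s\phi+s^2z+w^\ast}-e^{w^\ast}\bigr)\,dx$ and invoking the implicit function theorem from the non-vanishing of $\Phi_s(0,0)=\frac12\int_\Om e^{w^\ast}\phi^2\,dx$, after which \eqref{psk2} is divided by $s$ and $s\to0$. You instead satisfy the constraint exactly by the explicit additive constant $a(s)=\log\la-\log\int_\Om u^\ast e^{s\phi}\,dx$, which is $O(s^2)$ precisely because $\int_\Om u^\ast\phi\,dx=0$, and then pass to the same limit. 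Your version is more elementary and arguably more robust: the paper's nondegeneracy is verified for the $s$-derivative of $\Phi$, while a literal application of the implicit function theorem solving for $z=z(s)$ would need invertibility of the $z$-derivative, which degenerates as $s\to0$ (note $\Phi(0,z)\equiv0$); your closed-form normalization sidesteps this delicate point entirely, at no cost, since all that is needed is one constraint-respecting curve through $w^\ast$ tangent to $\phi$.
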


\begin{proof}
$(i)\implies (ii)$: \quad Assume (i), take  
\begin{equation}  
w\in V, \ \int_{\Om} e^w\,dx=\la, \ ||w-w^*||_V<\ve_0,  
 \label{46}
\end{equation} 
and let 
\[ \phi^*=\frac{u^*}{||u^*||_2}, \quad z=w-w^*, \quad \mathcal{P}z=z-(\phi^*,z)\phi^*. \] 
It holds that $(\mathcal{P}z, u^*)=0$, and hence 
\bge\label{psk3}
\left|\left|\mathcal{P}z\right|\right|_V\leq C \left|\left|\mathcal{M}(\mathcal{P}z)\right|\right|_{V^*} 
\ege
by \eqref{psk1}. Then, we obtain  
\begin{eqnarray}\label{psk4}
||z||_{V}&\leq& \left|\left|\mathcal{P}z\right|\right|_V+|(\phi^*,z)|\;||\phi^*||_V\nonumber\\
&\leq& C \left|\left|\mathcal{M}(\mathcal{P}z)\right|\right|_{V^*}+|(\phi^*,z)| ||\phi^*||_V\nonumber\\
& = & C\Vert \mathcal{M}z-(u^\ast, z)\Vert u^\ast\Vert_2^{-1}\mathcal{M}\phi^\ast\Vert_{V^\ast}+\vert (u^\ast, z)\vert \Vert \phi^\ast\Vert_V\cdot \Vert u^\ast\Vert_2^{-1} \nonumber\\ 
&\leq& C \left|\left|\mathcal{M}(z)\right|\right|_{V^*}+\vert (u^*,z)\vert (C \Vert \mathcal{M}\phi^*\Vert_{V^*}+\Vert \phi^*\Vert_V)\Vert u^*\Vert_2^{-1}.
\end{eqnarray} 

Here we have   
\[ \int_\Omega e^w \ dx=\int_\Omega e^{w^\ast} \ dx=\lambda \] 
by (\ref{441}), $\Vert u^\ast\Vert_1=\lambda$, and  (\ref{46}). Hence it holds that  
\begin{equation} 
0=\int_0^1 \int_{\Om} e^{sw+(1-s)w^*}(w-w^*)\,dx \,ds   
 \label{4410}
\end{equation} 
by 
\[ 
e^w-e^{w^*}=\int_0^1 \frac{d}{ds} e^{sw+(1-s)w^*}\,ds=\int_0^1  e^{sw+(1-s)w^*}(w-w^*)\,ds. 
\] 
Then (\ref{4410}) implies  
\begin{eqnarray} 
(u^*,z) & = & \int_{\Om} e^{w^*} (w-w^*)\,dx \nonumber\\ 
& = & \int_0^1 \int_{\Om}(e^{w^*}-e^{sw+(1-s)w^*})(w-w^*)\,dx\,ds.\nonumber\\ 
& = & \int_0^1 \int_{\Om}(e^{w^*}-e^{sw+(1-s)w^*})z \,dx\,ds.
 \label{411} 
\end{eqnarray} 

In (\ref{411}) we have 
\bgee
e^{w^*}-e^{sw+(1-s)w^*}&=&\int_0^1 \frac{d}{dr} e^{rw^*+(1-r)(sw+(1-s)w^*)}\,dr\\
&=& \int_0^1  e^{rw^*+(1-r)(sw+(1-s)w^*)} (-s)z\,dr, 
\egee
and therefore, 
\[ 
\vert (u^*,z)\vert \leq \int_0^1\int_0^1 \int_{\Om}   e^{rw^*+(1-r)(sw+(1-s)w^*)} s z^2\,dx\,dr\,ds.
\] 
By the Trudinger-Moser-Fontana inequality, any $K>0$ admits $C_1(K)$ such that 
\[ \Vert w\Vert_V\leq K \quad \Rightarrow \quad  \Vert e^{rw^*+(1-r)(sw+(1-s)w^*)}\Vert_2\leq C_1(K), \ 0\leq r,s\leq 1, \] 
and hence we find $C_2(K)>0$ such that 
\bge\label{psk5}
|(u^*,z)|\leq C_1(K)||z||_4^2\leq C_2(K) ||z||_V^2.
\ege

Combining \eqref{psk4} and \eqref{psk5}, we reach to
\[ \Vert z\Vert_V\leq C_3(K)(\Vert \mathcal{M}z\Vert_V+\Vert z\Vert_V^2) \] 
for $\Vert z\Vert_V\leq K$. Then (\ref{psk2}) folllows for $\varepsilon_0=\frac{1}{2C_0(K)}$, because then we have 
\[ \Vert z\Vert_V=\Vert w-w^\ast\Vert_V <\varepsilon_0 \quad \Rightarrow \quad C_3(K)\Vert z\Vert_V^2\leq \frac{1}{2}\Vert z\Vert_V \] 
and hence (\ref{psk2}) with $C=2C_3(K)$. 

\vspace{4mm} 

$(ii)\implies (i)$: \quad Given  
\begin{equation} 
\phi \in V, \ \int_{\Om} u^*\phi\ dx=0, 
 \label{413}
\end{equation} 
we show the conclusion of (\ref{psk1}): 
\begin{equation} 
\Vert \phi\Vert_V\leq C\Vert \mathcal{M}\phi\Vert_{V^\ast}. 
 \label{conclusion}
\end{equation} 
For this purpose, it suffices to assume 
\begin{equation} 
\phi\neq 0. 
 \label{nonzero}
\end{equation} 
Define
\[ 
\Phi(s,z)=\left\{ \begin{array}{ll}
\frac{1}{s} \int_{\Om} e^{s\phi+s^2 z+w^*}-e^{w^*}\,dx, & s\neq 0 \\
0, & s=0,  \end{array} \right. \quad (s,z)\in \R\times V.  
\]  
Note that
\begin{eqnarray*} 
e^{s\phi+s^2z+w^\ast}-e^{w^\ast} & = & e^{w^\ast}(e^{s\phi+s^2z}-1) \\ 
& = & e^{w^\ast}\{ (s\phi+s^2z)+\frac{1}{2}(s\phi+s^2z)^2+o(s^2)\} \\ 
& = & \{ s\phi+s^2(z+\frac{1}{2}\phi^2)\} e^{w^\ast}+o(s^2), \quad s\rightarrow 0,  
\end{eqnarray*} 
to deduce
\[ \Phi(s,z)=\int_\Omega \{ \phi+s(z+\frac{1}{2}\phi^2)\}e^{w^\ast} \ dx+o(s), \quad s\rightarrow 0. \] 
First, this $\Phi=\Phi(s,z)$ is continuous in $(s,z)\in \R\times V$ because 
\[ \lim_{s\rightarrow 0}\Phi(s,z)=0 \] 
follows from (\ref{441}) and (\ref{413}): 
\begin{equation} 
\int_\Omega e^{w^\ast}\phi \ dx=0. 
 \label{413x}
\end{equation} 
Second, the following  limit arises   
\[  
\lim_{s\rightarrow 0}\Phi_s(s,z)=\int_\Omega (z+\frac{1}{2}\phi^2)e^{w^\ast} \ dx
\] 
and hence  $\Phi$ is $C^1$ in $\R\times V$. It holds, in particular, that     
\[ 
\Phi_s(0,0)=\frac{1}{2}\int_{\Om} e^{w^*}\phi^2\,dx\neq 0 
\] 
by (\ref{nonzero}), and therefore, the implicit function theorem guarantees the existence of a $C^1$ function $z=z(s)$ of $s$ such that 
\[ z(0)=0, \quad \Phi(s,z(s))=0, \ \vert s\vert\ll 1. \] 
Accordingly, 
\[ w(s)=s\phi+s^2z(s)+w_\ast \] 
satisfies 
\begin{equation} 
w(0)=w_\ast, \quad \dot w(0)=\phi, \quad \int_\Omega e^{w(s)} \ dx=\int_\Omega e^{w^\ast} \ dx=\lambda, \ \vert s\vert \ll 1 
 \label{2414}
\end{equation} 
and hence 
\begin{equation} 
\Vert w(s)-w^*\Vert _V\leq C\Vert |\mathcal{M}(w(s)-w^*)\Vert_{V^*}, \quad |s|\ll 1
 \label{2415}
\end{equation} 
by (\ref{psk2}). Then, (\ref{conclusion}) follows from (\ref{2414})-(\ref{2415}).
\end{proof}

\section{Proof of Theorem \ref{thm14}}\label{sec5}

Given a non-degenerate steady-state $u^\ast=u^\ast(x)>0$ of (\ref{ieq1})-(\ref{consv}), define $w^\ast\in V$ by (\ref{441}). Then it holds that  
\begin{equation} 
\delta \mathcal{E}(w^\ast)=0, \quad \int_\Omega e^{w^\ast} \ dx=\lambda. 
 \label{511}
\end{equation} 
By Lemma \ref{lem0}, the operator $\mathcal{M}:V\rightarrow V^\ast$ defined by (\ref{operator}) is provided with the property (\ref{psk1}). Then we obtain $\varepsilon_0>0$ satisfying (\ref{psk2}) by Lemma \ref{lem0a}.  

Having these properties, we see that Theorem \ref{thm14} is reduced to the following lemma by the proof of Theorem \ref{thm2}. 

\begin{lem}
Let $w^\ast\in V$ satisfy (\ref{511}), and assume the propery (\ref{psk2}). Then, there arises that  $\theta=\frac{1}{2}$ in the conclusion of (\ref{theta}) for $w$ satisfying 
\begin{equation} 
w\in V, \quad \Vert w-w^\ast\Vert_V<\varepsilon_1, \quad \int_\Omega e^w \ dx=\lambda  
 \label{55}
\end{equation} 
for $\varepsilon_1>0$ sufficiently small. 
 \label{lem51}
\end{lem}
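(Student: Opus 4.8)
The plan is to exploit the non-degeneracy, encoded in the coercivity estimate (\ref{psk2}), to pin the {\L}ojasiewicz exponent at the borderline value $\theta=\frac12$. The guiding picture is that at a non-degenerate critical point the energy difference $\mathcal{E}(w)-\mathcal{E}(w^\ast)$ is comparable to $\|w-w^\ast\|_V^2$, while $\|\delta\mathcal{E}(w)\|_{V^\ast}$ is comparable to $\|w-w^\ast\|_V$; raising the former to the power $\frac12$ then matches the latter, which is exactly (\ref{theta}) with $\theta=\frac12$. Throughout I write $z=w-w^\ast$ and recall that the operator $\mathcal{M}=-\Delta-u^\ast$ of (\ref{operator}) coincides with the linearized operator $\mathcal{L}=\delta^2\mathcal{E}(w^\ast)=-\Delta-e^{w^\ast}$ of Section \ref{sec2}, since $u^\ast=e^{w^\ast}$ by (\ref{441}).

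First I would establish the upper bound on the energy. Because $\mathcal{E}:V\to\R$ is analytic (the analyticity of $\mathcal{E}_2$ via the Trudinger-Moser-Fontana inequality (\ref{ftm}) was recorded in Section \ref{sec2}) and $\delta\mathcal{E}(w^\ast)=0$ by (\ref{511}), Taylor's formula yields
\[
\mathcal{E}(w)-\mathcal{E}(w^\ast)=\tfrac12\,\delta^2\mathcal{E}(w^\ast)[z,z]+o(\|z\|_V^2),
\]
so that $|\mathcal{E}(w)-\mathcal{E}(w^\ast)|\le C\|z\|_V^2$ for $\|z\|_V$ small, with no sign condition on the quadratic form required.

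Next I would produce the matching lower bound on the gradient. Expanding $\delta\mathcal{E}$ around $w^\ast$ and using $\delta\mathcal{E}(w^\ast)=0$ together with (\ref{fv}) gives
\[
\delta\mathcal{E}(w)=\mathcal{M}z-R(z),\qquad R(z)=e^{w^\ast}\bigl(e^{z}-1-z\bigr),
\]
and the quadratic remainder is to be controlled in $V^\ast$ exactly as in the derivation of (\ref{psk5}): from $|e^{z}-1-z|\le\tfrac12 z^2 e^{|z|}$, Hölder's inequality and the Trudinger-Moser-Fontana inequality one obtains $\|R(z)\|_{V^\ast}\le C\|z\|_V^2$ for $\|z\|_V$ bounded. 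Since $w$ obeys the constraint $\int_\Omega e^w\,dx=\lambda$ imposed in (\ref{55}), the coercivity (\ref{psk2}) is applicable (taking $\varepsilon_1\le\varepsilon_0$) and gives
\[
\|z\|_V\le C\|\mathcal{M}z\|_{V^\ast}\le C\|\delta\mathcal{E}(w)\|_{V^\ast}+C\|z\|_V^2.
\]
Choosing $\varepsilon_1$ small enough that $C\|z\|_V<\tfrac12$ lets me absorb the quadratic term and conclude $\|z\|_V\le C\|\delta\mathcal{E}(w)\|_{V^\ast}$. Combining this with the upper bound gives
\[
|\mathcal{E}(w)-\mathcal{E}(w^\ast)|^{1/2}\le C\|z\|_V\le C\|\delta\mathcal{E}(w)\|_{V^\ast},
\]
which is (\ref{theta}) with $\theta=\frac12$.

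I expect the only genuine technical obstacle to be the $V^\ast$-bound on the nonlinear remainder $R(z)$: one must ensure that the exponential nonlinearity is handled so that the error truly sits at order $\|z\|_V^2$ in the dual norm, which is precisely where the Trudinger-Moser-Fontana inequality enters, in the same spirit as in the proof of Lemma \ref{lem0a}. Once this estimate and the coercivity (\ref{psk2}) are in hand, the remainder of the argument is a soft absorption of the quadratic term for $\varepsilon_1$ sufficiently small.
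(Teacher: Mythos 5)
Your proposal is correct and follows essentially the same route as the paper: you linearize $\delta\mathcal{E}(w)=\mathcal{M}z-R(z)$ with a quadratic remainder controlled in $V^\ast$ via the Trudinger--Moser--Fontana inequality, invoke the coercivity (\ref{psk2}) (legitimately, thanks to the mass constraint in (\ref{55})), absorb the quadratic term, and combine with the bound $|\mathcal{E}(w)-\mathcal{E}(w^\ast)|\leq C\Vert z\Vert_V^2$, which the paper obtains as Lemma \ref{lem2} by an integral mean-value argument rather than your Taylor expansion --- an inessential difference. No gaps.
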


For the proof of this lemma, we first verify several facts derived from the Trudinger-Moser-Fontana inequality

 \begin{lem}
 Any $K>0$ admits $C(K)>0$ such that 
\begin{equation} 
 w_1,w_2\in V, \ ||w_1||_V, ||w_2||_V\leq K \ \ \Rightarrow \ \ 
 ||\delta\mathcal{E}(w_1)-\delta\mathcal{E}(w_2)||_{V^*}\leq C(K)||w_1-w_2||_V.
  \label{418}
\end{equation}  
 \end{lem}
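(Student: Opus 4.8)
The plan is to split $\delta\mathcal{E}(w_1)-\delta\mathcal{E}(w_2)$ into a linear part and an exponential part, and to control the exponential part by a mean-value representation together with the Trudinger-Moser-Fontana inequality. Recalling from \eqref{fv} that $\delta\mathcal{E}(w)=-\Delta w-e^w+\frac{\la}{|\Om|}$, the constant $\frac{\la}{|\Om|}$ cancels and one is left with
\[
\delta\mathcal{E}(w_1)-\delta\mathcal{E}(w_2)=-\Delta(w_1-w_2)-(e^{w_1}-e^{w_2}).
\]
For the linear term I would simply use that for $\phi,\psi\in V$ we have $\langle -\Delta\phi,\psi\rangle=(\nabla\phi,\nabla\psi)$ (no boundary term, since $\partial\Om=\emptyset$), whence $\vert\langle -\Delta\phi,\psi\rangle\vert\leq \Vert\phi\Vert_V\Vert\psi\Vert_V$ and therefore $\Vert -\Delta(w_1-w_2)\Vert_{V^\ast}\leq \Vert w_1-w_2\Vert_V$. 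All the difficulty is thus concentrated in the exponential term.

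For the nonlinear term I would write, exactly as in the mean-value computation used in the proof of Lemma \ref{lem0a},
\[
e^{w_1}-e^{w_2}=\int_0^1 e^{sw_1+(1-s)w_2}(w_1-w_2)\,ds,
\]
pair against $\psi\in V$, and estimate
\[
\vert\langle e^{w_1}-e^{w_2},\psi\rangle\vert\leq \int_0^1\int_\Om e^{sw_1+(1-s)w_2}\,\vert w_1-w_2\vert\,\vert\psi\vert\,dx\,ds.
\]
Applying H\"older's inequality with exponents $(2,4,4)$ to the inner integral bounds it by $\Vert e^{sw_1+(1-s)w_2}\Vert_2\,\Vert w_1-w_2\Vert_4\,\Vert\psi\Vert_4$. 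Since $\Om$ is a surface, the Sobolev embedding $H^1(\Om)\hookrightarrow L^4(\Om)$ gives $\Vert w_1-w_2\Vert_4\leq C\Vert w_1-w_2\Vert_V$ and $\Vert\psi\Vert_4\leq C\Vert\psi\Vert_V$.

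The key step is the uniform control of $\Vert e^{sw_1+(1-s)w_2}\Vert_2$. Here I would note that $\Vert sw_1+(1-s)w_2\Vert_V\leq K$ for all $s\in[0,1]$ by convexity of the norm, and that $\Vert e^v\Vert_2^2\leq \Vert e^{\vert 2v\vert}\Vert_1$; hence the consequence of \eqref{ftm} recorded in the Remark following it, applied with parameter $2K$, yields $\Vert e^{sw_1+(1-s)w_2}\Vert_2\leq C(K)$ uniformly in $s$. Combining these estimates and taking the supremum over $\Vert\psi\Vert_V\leq 1$ gives $\Vert e^{w_1}-e^{w_2}\Vert_{V^\ast}\leq C(K)\Vert w_1-w_2\Vert_V$, which together with the linear bound produces \eqref{418}. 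The main obstacle is precisely this exponential estimate: the splitting and the H\"older argument are routine, whereas the exponential nonlinearity can only be tamed in $V^\ast$ through the two-dimensional Trudinger-Moser-Fontana inequality, which is the structural reason the analysis is confined to surfaces.
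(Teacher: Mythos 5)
Your proposal is correct and follows essentially the same route as the paper: split $\delta\mathcal{E}(w_1)-\delta\mathcal{E}(w_2)$ into the linear part and the exponential part, represent $e^{w_1}-e^{w_2}$ by the mean value formula, apply H\"older's inequality, and control the exponential factor uniformly via the Trudinger--Moser--Fontana inequality \eqref{ftm}. The only (immaterial) differences are your H\"older exponent allocation $(2,4,4)$ versus the paper's $(4,4,2)$, and that you quote the Remark following \eqref{ftm} for the bound $\Vert e^{sw_1+(1-s)w_2}\Vert_2\leq C(K)$, whereas the paper re-derives this bound by splitting off the mean $\overline{w}$ and applying \eqref{ftm} to the mean-zero part.
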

 \begin{proof}
Given $w\in V=H^1(\Omega)$, let 
\[ \overline{w}=\frac{1}{\vert \Omega\vert}\int_\Omega w \ dx, \quad [w]=w-\overline{w}\in V_0. \] 
Take $z\in V$ then  we have
\begin{equation} 
\langle z, \delta\mathcal{E}(w_1)-\delta\mathcal{E}(w_2)\rangle_{V,V'}
=\int_{\Om} \nabla z \cdot \nabla(w_1-w_2)-z(e^{w_1}-e^{w_2}) \,dx 
 \label{213}
\end{equation} 
by \eqref{fv}, where 
\begin{eqnarray*}
e^{w_1}-e^{w_2} & = & \int_0^1 \frac{d}{ds} e^{sw_1+(1-s)w_2}\,ds=\int_0^1  e^{sw_1+(1-s)w_2}\,ds \cdot (w_1-w_2) \nonumber\\ 
& = & \int_0^1e^{s\overline{w_1}+(1-s)\overline{w_2}}\cdot e^{[sw_1+(1-s)w_2]} \ ds \cdot (w_1-w_2). 
\end{eqnarray*} 
Hence it follows that 
\begin{equation} 
\vert e^{w_1}-e^{w_2}\vert\leq e^{\vert \overline{w_1}\vert+\vert \overline{w_2}\vert}\cdot \int_0^te^{[sw_1+(1-2)w_2]}ds \cdot \vert w_1-w_2\vert.  
 \label{exp1}
\end{equation} 
Letting $w\in V\setminus\R$, on the other hand, we use   
 \bgee
 [w]\leq \frac{4 \pi [w]^2}{||\nabla [w]||_2^2}+\frac{1}{\pi} ||\nabla [w]||_2^2
 \egee
 to deduce 
\begin{equation} \label{ti}
 \int_{\Om} e^{[w]}\,dx\leq C \cdot \exp \ (\frac{1}{\pi} ||\nabla [w]||_2^2), \quad w\in V,
\end{equation} 
by (\ref{ftm}). 

Inequalities (\ref{exp1})-(\ref{ti}) imply  
  \begin{eqnarray*} 
  \left|\int_{\Om} z \left(e^{w_1}-e^{w_2}\right)\,dx\right|&\leq& ||z||_4 \exp\left(\vert\bar{w}_1\vert+\vert\bar{w}_2\vert\right)\left |\left| \int_0^1 e^{[sw_1+(1-s)w_2]}\,ds\right|\right|_4 ||w_1-w_2||_2\\
  &\leq& C(K) ||z||_V ||w_1-w_2||_V, \quad \Vert w_1\Vert_V, \ \Vert w_2\Vert_V\leq K, 
  \end{eqnarray*} 
and hence (\ref{418}) is valid  due to (\ref{213}). 
\end{proof}

  \begin{lem}\label{lem2}
Given $w^*\in V$  with  $\delta\mathcal{E}(w^*)=0$, any $K>0$ admits $C=C(K)>0$ such that 
\[ 
w\in V, \ ||w||_V\leq K \quad \Rightarrow \quad 
|\mathcal{E}(w)-\mathcal{E}(w^*)|\leq C||w-w^*||_V^2.
\] 
  \end{lem}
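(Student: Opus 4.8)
The plan is to exploit the vanishing of the first variation at $w^*$ through a first-order Taylor expansion with integral remainder, and then to absorb that remainder using the Lipschitz continuity of $\delta\mathcal{E}$ already recorded in (\ref{418}). Writing $z=w-w^*$, I would begin from the fundamental-theorem-of-calculus identity
\[
\mathcal{E}(w)-\mathcal{E}(w^*)=\int_0^1 \langle z, \delta\mathcal{E}(w^*+sz)\rangle_{V,V^*}\,ds,
\]
which is valid because $\mathcal{E}:V\to\R$ is $C^1$ along the segment joining $w^*$ to $w$. Since $\delta\mathcal{E}(w^*)=0$, I may subtract it freely inside the integral to get
\[
\mathcal{E}(w)-\mathcal{E}(w^*)=\int_0^1 \langle z, \delta\mathcal{E}(w^*+sz)-\delta\mathcal{E}(w^*)\rangle_{V,V^*}\,ds.
\]

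Next I would estimate the integrand by the duality pairing bound $|\langle z,\cdot\rangle_{V,V^*}|\leq \|z\|_V\,\|\cdot\|_{V^*}$ and then invoke (\ref{418}). To apply the latter I must keep the interpolating points $w^*+sz=sw+(1-s)w^*$ inside a fixed ball, so I would set $K'=\max\{K,\|w^*\|_V\}$; convexity of the norm gives $\|w^*+sz\|_V\leq s\|w\|_V+(1-s)\|w^*\|_V\leq K'$ for every $s\in[0,1]$. Consequently (\ref{418}) supplies $\|\delta\mathcal{E}(w^*+sz)-\delta\mathcal{E}(w^*)\|_{V^*}\leq C(K')\,\|sz\|_V=C(K')\,s\,\|z\|_V$, and combining with the pairing bound yields the integrand estimate $C(K')\,s\,\|z\|_V^2$. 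Integrating over $s\in[0,1]$ then produces
\[
|\mathcal{E}(w)-\mathcal{E}(w^*)|\leq \tfrac12\,C(K')\,\|w-w^*\|_V^2,
\]
which is exactly the assertion with $C=\tfrac12 C(K')$ depending on $K$ and on the fixed point $w^*$.

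The argument is essentially routine once (\ref{418}) is available; the single point requiring mild care is the uniform control of the interpolants $w^*+sz$, handled by enlarging $K$ to $K'$ so that the whole segment lies in a ball on which the Lipschitz constant is valid. In particular no direct appeal to the Trudinger-Moser-Fontana inequality is needed here, since its effect has already been packaged into the Lipschitz estimate (\ref{418}).
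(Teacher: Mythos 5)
Your proof is correct and follows essentially the same route as the paper's: the fundamental theorem of calculus along the segment $sw+(1-s)w^*$, subtraction of $\delta\mathcal{E}(w^*)=0$ inside the pairing, the Lipschitz estimate (\ref{418}) applied to the interpolants, and the factor $\int_0^1 s\,ds=\tfrac12$. Your explicit enlargement of $K$ to $K'=\max\{K,\|w^*\|_V\}$ is a small point of care that the paper leaves implicit, but it is not a different argument.
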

\begin{proof}
  Since 
  \bgee
  \mathcal{E}(w)-\mathcal{E}(w^*)&=&\int_0^1 \frac{d}{ds} \mathcal{E} (sw+(1-s)w^*)\,ds\\
  &=& \int_0^1 \langle w-w^*,\delta\mathcal{E}(sw+(1-s)w^*) \rangle_{V,V^*}\,ds\\
  &=& \int_0^1 \left<w-w^*,\delta\mathcal{E}(sw+(1-s)w^*)- \delta\mathcal{E}(w^*)\right>_{V,V^*}\,ds
  \egee
we obtain 
  \bgee
   |\mathcal{E}(w)-\mathcal{E}(w^*)|&\leq&||w-w^*||_{V}\int_0^1 \left|\left|\delta\mathcal{E}(sw+(1-s)w^*)- \delta\mathcal{E}(w^*) \right|\right|_{V^*}\,ds\\
   &\leq& C(K) ||w-w^*||_{V} ||w-w^*||_V \int_0^1 s\,ds=\frac{C(K)}{2}||w-w^*||_V^2
  \egee
by the previous lemma. 
\end{proof}

We are ready to prove the key reasult in the current section.

\begin{proof}[Proof of Lemma \ref{lem51}] 

We take $w$ as in (\ref{55}). Recall $\delta\mathcal{E}(w^\ast)=0$, and deduce from (\ref{fv}) that 
\begin{eqnarray} 
-\delta\mathcal{E}(w) & = & -\delta\mathcal{E}(w)+\delta\mathcal{E}(w^*) \nonumber\\ 
& = & \Delta(w-w^*)+(e^w-e^{w^*}) \nonumber\\
&=& \Delta(w-w^*)+\int_0^1 \frac{d}{ds} e^{sw+(1-s)w^*}\,ds \nonumber\\
& = & \Delta (w-w^\ast)+\int_0^1e^{sw+(1-s)w^\ast}(w-w^\ast) \ ds \nonumber\\ 
&=&\Delta(w-w^*)+e^{w^*}(w-w^*)+\int_0^1 (e^{sw+(1-s)w^*}-e^{w^*})(w-w^*)\,d\zeta \nonumber\\ 
& = & -\mathcal{M}(w-w^\ast)+z,  
 \label{56}
\end{eqnarray} 
where 
\[ z=\int_0^1 (e^{sw+(1-s)w^*}-e^{w^*})(w-w^*)\,ds. \] 
Here we use 
\begin{eqnarray*} 
e^{sw+(1-s)w^*}-e^{w^*}&=&\int_0^1 \frac{d}{d \zeta}e^{\zeta(sw+(1-s)w^*)+(1-\zeta)w^*}\,d\zeta\\
&=& \int_0^1 e^{\zeta(sw+(1-s)w^*)+(1-\zeta)w^*}\, s(w-w^*) \ d\zeta, 
\end{eqnarray*} 
to derive 
\[ 
\vert z \vert \leq \vert w-w^*\vert ^2 e^{\vert w\vert+\vert w^*\vert}. 
\] 
Hence it holds that 
\[ 
||z||_2\leq ||\exp(\vert w\vert)||_4 \cdot ||\exp(\vert w^*\vert)||_4\cdot ||w-w^*||^2, 
\] 
and therefore, the assumption (\ref{55}) ensures 
\begin{equation}\label{est1}
\Vert z\Vert_{V^*}\leq C_1\Vert z\Vert_2\leq C_2||w-w^*||_2^2\leq C_3||w-w^*||_V^2
\end{equation} 
by the Trudinger-Moser-Fontana inequality. 

Since $\mathcal{M}: V\to V^\ast$ is provided with (\ref{psk2}), it follows that 
\[ 
\Vert w-w^*\Vert_V\leq C_1 \Vert \mathcal{M}(w-w^\ast)\Vert_{V^\ast} 
\] 
from (\ref{55}) if $\varepsilon_1\leq \varepsilon_0$. By (\ref{56})-(\ref{est1}), therefore, we obtain 
\[ \Vert w-w^\ast\Vert_V\leq C_2(\Vert \delta \mathcal{E}(w)\Vert_{V^\ast}+\Vert w-w^\ast\Vert_V^2). \] 
Choosing $0<\varepsilon_1\ll 1$ in (\ref{55}), then we reach to
\begin{equation}
\Vert w-w^\ast\Vert_{V^\ast}\leq C_3\Vert \delta \mathcal{E}(w)\Vert_{V^\ast}. 
 \label{59}
\end{equation} 

Lemma \ref{lem2} now guarantees 
\[ \vert \mathcal{E}(w)-\mathcal{E}(w^\ast)\vert \leq C_4\Vert w-w^\ast\Vert_V^2\leq C_5\Vert \delta \mathcal{E}(w)\Vert_{V^\ast}^2, \] 
which entails the conclusion of (\ref{theta}) for $\theta=1/2$ under the presense of (\ref{55}). 
\end{proof}

\section*{Acknowledgement} 

The second author was supported by Kakenhi 19H01799.  The first author would like to thank Professor Nicholas Alikakos for suggesting him the  {\L}ojasiewicz-Simon gradient inequlaity approach for the investigation of the convergence of the normalized Ricci flow.

\end{document}